\def\ps@pprintTitle{%
	\let\@oddhead\@empty
	\let\@evenhead\@empty
	\def\@oddfoot{\centerline{\thepage}}%
	\let\@evenfoot\@oddfoot}
\theoremstyle{definition}
\newtheorem{de}{Definition}
\theoremstyle{plain}
\newtheorem{theo}[de]{Theorem}
\newtheorem{lemma}[de]{Lemma}
\theoremstyle{remark}
\newcommand{\R}{\mathbb{R}}
\newcommand{\p}{\mathbb{P}}
\newcommand{\E}{\mathbb{E}}
\newcommand{\1}{\mathds{1}}
\newcommand{\dd}{\mathrm{d}}
\def\argmax{\mathop{\mathrm{argmax}}}
\newcommand{\CM}{\mathrm{CM}}
\journal{Statistics and Probability Letters}
\begin{document}

\begin{frontmatter}

\title{The distance between a naive cumulative estimator and its least concave majorant}


\author[mymainaddress]{Hendrik P.~Lopuha\"a}

\author[mymainaddress]{Eni Musta\corref{mycorrespondingauthor}}
\cortext[mycorrespondingauthor]{Corresponding author}
\ead{e.musta@tudelft.nl}

\address[mymainaddress]{DIAM, Faculty EEMCS, Delft University of Technology, Mekelweg 4, 2628 CD Delft, The Netherlands}

\begin{abstract}
We consider the process $\widehat\Lambda_n-\Lambda_n$, where $\Lambda_n$ is a cadlag step estimator for the primitive $\Lambda$
of a nonincreasing function $\lambda$ on $[0,1]$, and $\widehat\Lambda_n$ is the least concave majorant of $\Lambda_n$.
We extend the results in~\cite{kulikov-lopuhaa2006SPL,kulikov-lopuhaa2008} to the general setting considered in~\cite{durot2007}.
Under this setting we prove that a suitably scaled version of $\widehat\Lambda_n-\Lambda_n$
converges in distribution to the corresponding process for two-sided Brownian motion with parabolic drift
and we establish a central limit theorem for the $L_p$-distance between $\widehat\Lambda_n$ and $\Lambda_n$.
\end{abstract}

\begin{keyword}
Least concave majorant
\sep
Grenander-type estimator
\sep
Limit distribution
\sep
Central limit theorem for $L_p$-distance
\sep
Brownian motion with parabolic drift
\MSC[2010] 
60F05\sep 62E20
\end{keyword}

\end{frontmatter}


\section{Introduction}
\label{sec:introduction}
Grenander-type estimators are well known methods for estimation of monotone curves.
In case of estimating nonincreasing curves, they are constructed by starting with a naive estimator for the primitive of the curve of interest
and then take the left-derivative of the least concave majorant (LCM) of the naive estimator.
The first example can be found in~\cite{grenander1956} in the context of estimating a nonincreasing density $f$ on $[0,\infty)$
on the basis of an i.i.d.~sample from $f$.
The empirical distribution function $F_n$ of the sample is taken as a naive estimator for the cumulative distribution function
corresponding to $f$ and the Grenander estimator is found by taking the left-derivative $\widehat{f}_n$ of the least concave majorant $\widehat{F}_n$.
Similar estimators have been developed in other statistical models,
e.g., regression (see~\cite{brunk1958}), 
random censoring (see ~\cite{huang-wellner1995}), 
or the Cox model (see~\cite{LopuhaaNane2013}).
\cite{durot2007} considers Grenander-type estimators in a general setup that incorporates several statistical models.
A large part of the literature is devoted to investigating properties of Grenander-type estimators for monotone curves,
and somewhat less attention is paid to properties of the difference between the corresponding naive estimator for the primitive of the curve 
and its LCM.

\cite{kiefer-wolfowitz1976} show that $\sup_t|\widehat F_n-F_n|=O_p((n^{-1}\log n)^{2/3})$.
Although the first motivation for this type of result has been asymptotic optimality of shape constrained estimators,
it has several important statistical applications.
The Kiefer-Wolfowitz result was a key argument in~\cite{SenBodhisattvaBanerjeeWoodroofe2010} to prove that the $m$ out of $n$ bootstrap from $\widehat F_n$ works.
\cite{mammen1991} suggested to use the result to make an asymptotic comparison between
a smoothed Grenander-type estimator and an isotonized kernel estimator in the regression context. 
See also~\cite{WangWoodroofe2007} for a similar application of their Kiefer-Wolfowitz comparison theorem.
An extension to a more general setting was established in~\cite{DurotLopuhaa14},
which has direct applications in~\cite{DurotGroeneboomLopuhaa2013} to prove that a smoothed bootstrap from a Grenander-type estimator
works for $k$-sample tests, and in~\cite{GroeneboomJongbloed13} and~\cite{LopuhaaMusta2017} to extract the pointwise limit behavior of smoothed Grenander-type estimators for a monotone hazard
from that of ordinary kernel estimators.
To approximate the $L_p$-error of smoothed Grenander-type estimators by that of ordinary kernel estimators,
such as in~\cite{CsorgoHorvath1988} for kernel density estimators, a Kiefer-Wolfowitz type result no longer suffices.
In that case, results on the $L_p$-distance, between $\widehat F_n$ and $F_n$ are more appropriate,
such as the ones in~\cite{durottoquet2003} and~\cite{kulikov-lopuhaa2008}.

In this paper, we extend the results in~\cite{durottoquet2003} and~\cite{kulikov-lopuhaa2008} to the general setting of~\cite{durot2007}.
Our main result is a central limit theorem for the $L_p$-distance between
$\widehat{\Lambda}_n$ and $\Lambda_n$, where $\Lambda_n$ is a naive estimator for the primitive $\Lambda$ of a monotone curve $\lambda$ 
and $\widehat\Lambda_n$ is the LCM of $\Lambda_n$.
As special cases we recover Theorem~5.2 in~\cite{durottoquet2003} and Theorem~2.1 in~\cite{kulikov-lopuhaa2008}.
Our approach requires another preliminary result, which might be of interest in itself, i.e.,
a limit process for a suitably scaled difference between~$\widehat\Lambda_n$ and $\Lambda_n$.
As special cases we recover Theorem~1 in~\cite{Wang1994}, Theorem~4.1 in~\cite{durottoquet2003},
and Theorem~1.1 in~\cite{kulikov-lopuhaa2006SPL}.

\section{Main results} 
We consider the general setting in~\cite{durot2007}.
Let $\lambda:[0,1]\to\R$ be nonincreasing and assume that we have at hand a cadlag step estimator $\Lambda_n$ of
\[
\Lambda(t)=\int_0^t\lambda(u)\,\dd u, \quad t\in[0,1].
\]
In the sequel we will make use of the following assumptions.
\begin{enumerate}
\item[(A1)]
$\lambda$ is strictly decreasing and twice continuously differentiable on $[0,1]$ with $\inf_t|\lambda'(t)|>0$.
\item[(A2)]
Let $B_n$ be either a Brownian motion or a Brownian bridge.
There exists $q>6$, $C_q>0$, $L:[0,1]\to\R$, and versions of $M_n=\Lambda_n-\Lambda$ and $B_n$ such that
\[
\p\left(n^{1-1/q}\sup_{t\in[0,1]}\left|M_n(t)-n^{-1/2}B_n\circ L(t)\right|>x \right)\leq C_qx^{-q}
\]
for all $x\in(0,n]$. 
Moreover, $L$ is increasing and twice differentiable on $[0,1]$, with $\sup_t|L''(t)|<\infty$ and $\inf_t|L'(t)|>0.$
\end{enumerate}
Note that this setup includes several statistical models,
such as monotone density, monotone regression, and the monotone hazard  model under random censoring,
see~\cite{durot2007}[Section 3].

We consider the distance between $\Lambda_n$ and its least concave majorant $\widehat{\Lambda}_n=\CM_{[0,1]}\Lambda_n$, 
where~$\CM_I$ maps a function $h:\R\to\R$ into the least concave majorant of $h$ on the interval $I\subset\R$.
Consider the process
\begin{equation}
\label{def:An}
A_n(t)=n^{2/3}\left( \widehat{\Lambda}_n(t)-\Lambda_n(t)\right),\qquad t\in[0,1],
\end{equation}
and define
\begin{equation}
\label{def:Z and zeta}
Z(t)=W(t)-t^2,\qquad \zeta(t)=[\CM_{\R}Z](t)-Z(t),
\end{equation}
where $W$ denotes a standard two-sided Brownian motion originating from zero.
For each $t\in(0,1)$ fixed and $t+c_2(t)sn^{-1/3}\in(0,1)$, define
\begin{equation}
\label{def:zeta nt}
\zeta_{nt}(s)=c_1(t)A_n\left(t+c_2(t)sn^{-1/3}\right),
\end{equation}
where
\begin{equation}
\label{def:c1c2}
c_1(t)=\left(\frac{|\lambda'(t)|}{2L'(t)^2} \right)^{1/3},\qquad c_2(t)=\left(\frac{4L'(t)}{|\lambda'(t)|^2} \right)^{1/3}.
\end{equation}
Our first result is the following theorem, {which extends Theorem~1.1 in~\cite{kulikov-lopuhaa2006SPL}}.
\begin{theo}
\label{theo:limit process}
Suppose that assumptions (A1)-(A2) are satisfied.
Let $\zeta_{nt}$ and $\zeta$ be defined in~\eqref{def:zeta nt} and~\eqref{def:Z and zeta}.
Then the process $\{\zeta_{nt}(s):s\in\R\}$ converges in distribution to the process
$\{\zeta(s):s\in\R\}$ in $D(\R)$, the space of cadlag function on $\R$.
\end{theo}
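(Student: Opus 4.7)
The plan is to localize around $t$, rescale both time and space so that the rescaled naive estimator converges to a two-sided Brownian motion with parabolic drift, and then push this convergence through the LCM-minus-identity functional $f\mapsto \CM f-f$. The strategy mirrors Theorem~1.1 in~\cite{kulikov-lopuhaa2006SPL} and Theorem~4.1 in~\cite{durottoquet2003}, with assumption (A2) replacing the specific couplings used there. First, I would exploit the invariance of $\CM$ under addition of an affine function: replacing $\Lambda_n(u)$ by $R_n(u)=\Lambda_n(u)-\Lambda_n(t)-\lambda(t)(u-t)$ leaves $\widehat\Lambda_n-\Lambda_n$ unchanged. By the positive-scaling and affine time-change equivariance of $\CM$, setting
\[
\tilde Z_n(s)=c_1(t)\,n^{2/3}\,R_n\!\left(t+c_2(t)sn^{-1/3}\right),\qquad I_n=\left[-tc_2(t)^{-1}n^{1/3},\,(1-t)c_2(t)^{-1}n^{1/3}\right],
\]
one obtains the identity $\zeta_{nt}(s)=[\CM_{I_n}\tilde Z_n](s)-\tilde Z_n(s)$ for $s\in I_n$.

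The next step is to show $\tilde Z_n\to Z$ in distribution in $D(\R)$. I would decompose $\tilde Z_n=D_n+S_n$, where $D_n$ comes from $\Lambda$ and $S_n$ from $M_n=\Lambda_n-\Lambda$. A second-order Taylor expansion of $\Lambda$ around $t$ gives $\Lambda(u)-\Lambda(t)-\lambda(t)(u-t)=\tfrac12\lambda'(t)(u-t)^2+o((u-t)^2)$, and combined with the algebraic identity $\tfrac12|\lambda'(t)|c_1(t)c_2(t)^2=1$ that follows from~\eqref{def:c1c2}, this yields $D_n(s)=-s^2+o(1)$ uniformly on compact sets. For the stochastic part, (A2) permits replacing $M_n$ by $n^{-1/2}B_n\circ L$ with uniform error of order $O_p(n^{-1+1/q})$; multiplied by $c_1(t)n^{2/3}$ the remainder is $O_p(n^{-1/3+1/q})=o_p(1)$ because $q>6$. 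A Taylor expansion $L(u)=L(t)+L'(t)(u-t)+O((u-t)^2)$, Brownian scaling, and the identity $c_1(t)^2 L'(t)c_2(t)=1$ then imply that the rescaled stochastic part converges in $D(\R)$ to a standard two-sided Brownian motion $W$. Putting the two pieces together gives $\tilde Z_n\to Z$ in distribution in $D(\R)$.

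The final step is to deduce convergence of $\CM_{I_n}\tilde Z_n-\tilde Z_n$ to $\CM_\R Z-Z$, and this is where the main obstacle lies: $\CM$ is continuous in the sup-norm only on fixed compact intervals, while here the domain $I_n$ grows to $\R$. The standard remedy is a localization argument based on the parabolic drift of $Z$: almost surely, for every compact $K\subset\R$, the restriction of $\CM_\R Z$ to $K$ depends only on the values of $Z$ on a bounded random window. I would formalize this by showing that for any $K\subset\R$ and $\eta>0$ one can choose $T=T(K,\eta)>0$, independent of $n$, such that with probability at least $1-\eta$ the restrictions of both $\CM_{I_n}\tilde Z_n$ and $\CM_\R Z$ to $K$ agree with the LCMs of the respective processes restricted to $[-T,T]$. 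This is a deterministic statement on touch-point locations combined with a uniform tail bound for the concave majorant against the parabolic drift; under (A1)--(A2) one reproduces the touch-point tail estimates of~\cite{durottoquet2003} for the present setting. Once the localization is in place, the LCM is Lipschitz on $D([-T,T])$ and the continuous mapping theorem, together with $\tilde Z_n\to Z$ in $D([-T,T])$ established above, finishes the proof.
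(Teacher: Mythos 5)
Your proposal is correct and follows essentially the same route as the paper: rescale and use affine invariance of the map $h\mapsto\CM h-h$, establish local weak convergence of the rescaled naive estimator to Brownian motion with parabolic drift (the paper's Lemma~\ref{lem:Lemma1.1SPL}, with the same use of (A2) and the same scaling identities), and then localize the concave majorant before applying the continuous mapping theorem. Your deferred localization claim --- that with high probability, uniformly in $n$, the LCMs over the growing interval and over a fixed window $[-T,T]$ coincide on a compact set --- is exactly the content of the events $N^J_{nt}(d)$ and Lemma~\ref{lemma:probability_N_W} (together with Lemma~1.2 of \cite{kulikov-lopuhaa2006SPL} for the limit process), which the paper proves via the switching relation and tail bounds for the inverse process from \cite{durot2002} and \cite{DurotKulikovLopuhaa2012}.
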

Note that as a particular case $\zeta_{nt}(0)$ converges weakly to $\zeta(0)$.
In this way, we recover Theorem~1 in~\cite{Wang1994} and Theorem~4.1 in~\cite{durottoquet2003}.
The proof of Theorem~\ref{theo:limit process} follows the line of reasoning in~\cite{kulikov-lopuhaa2006SPL}. 

{Let us briefly sketch the argument to prove Theorem~\ref{theo:limit process}.	
Note that $A_n=D_{[0,1]}[n^{2/3}\Lambda_n]$ and $\zeta=D_{\R}[Z]$, where
$D_Ih=\CM_Ih-h$, for $h:\R\to\R$.
Since $D_I$ is a continuous mapping, 
the main idea is to apply the continuous mapping theorem to properly scaled approximations
of the processes~$\Lambda_n$ and $Z$ on a suitable chosen fixed interval $I$.
The first step is to determine the weak limit of~$\Lambda_n$, 
which is given in the following lemma}. 
\begin{lemma}
	\label{lem:Lemma1.1SPL}
	Suppose that assumptions (A1)-(A2) are satisfied.
	Then for $t\in(0,1)$ fixed, the process
	$X_{nt}(s)
	=
	n^{2/3}
	\left(
	\Lambda_n(t+sn^{-1/3})-\Lambda_n(t)-\left(\Lambda(t+sn^{-1/3})-\Lambda(t)\right)
	\right)$
	converges in distribution to the process $\{W(L'(t)s):s\in\R\}$.
\end{lemma}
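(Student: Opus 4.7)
Rewriting $X_{nt}(s) = n^{2/3}\bigl(M_n(t+sn^{-1/3}) - M_n(t)\bigr)$ with $M_n = \Lambda_n - \Lambda$, the plan is to use assumption (A2) to replace $M_n$ by its Gaussian approximation $n^{-1/2} B_n \circ L$ with error that is negligible after multiplication by $n^{2/3}$, and then identify the weak limit of the remaining rescaled Gaussian process via Brownian scaling combined with a Taylor expansion of $L$ around $t$.

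\textbf{Step 1 (reduction to a Gaussian process).} Setting $R_n(u) = M_n(u) - n^{-1/2} B_n(L(u))$, decompose
$$X_{nt}(s) = n^{2/3}\bigl(R_n(t+sn^{-1/3}) - R_n(t)\bigr) + Y_n(s),\qquad Y_n(s) := n^{1/6}\bigl(B_n(L(t+sn^{-1/3})) - B_n(L(t))\bigr).$$
Applying (A2) with $x = \varepsilon n^{1/3-1/q}$ gives, for any $\varepsilon>0$ and $n$ large enough,
$$\p\bigl(2 n^{2/3}\sup_{u\in[0,1]}|R_n(u)| > \varepsilon\bigr) \leq C_q\,\varepsilon^{-q}\,n^{\,1-q/3},$$
which tends to $0$ because $q>6>3$. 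Hence the first summand is $o_p(1)$ uniformly in $s$ on any compact subset of $\R$.

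\textbf{Step 2 (weak limit of $Y_n$).} The process $Y_n$ is centred Gaussian. From the Taylor expansion $L(t+sn^{-1/3}) - L(t) = L'(t)s\,n^{-1/3} + O(n^{-2/3})$, which holds uniformly in $s$ on compact sets since $\sup_t|L''|<\infty$, a direct covariance computation shows that the finite-dimensional distributions of $Y_n$ converge to those of the two-sided Brownian motion $s\mapsto W(L'(t)s)$; this works identically when $B_n$ is a Brownian motion and when it is a Brownian bridge, because the bridge correction to the covariance is of order $n^{1/3}\cdot O(n^{-2/3})=o(1)$. Tightness on $D([-K,K])$ for each $K>0$ follows from the Gaussian bound
$$\E\bigl[(Y_n(s_1)-Y_n(s_2))^4\bigr] \leq 3\bigl(\mathrm{Var}(Y_n(s_1)-Y_n(s_2))\bigr)^2 \leq C|s_1-s_2|^2,$$
which is uniform in $n$ on the compact interval, via Kolmogorov's criterion. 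Combined with Step~1, this yields $X_{nt}\Rightarrow W(L'(t)\cdot)$ in $D(\R)$.

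\textbf{Main obstacle.} The only nontrivial technical point is matching the tail bound in (A2) with the amplification factor $n^{2/3}$; the standing assumption $q>6$ makes this comfortable (only $q>3$ is strictly required here). Handling both choices of $B_n$ in parallel is a minor bookkeeping issue, with the quadratic bridge correction being of smaller order and vanishing in the scaling limit.
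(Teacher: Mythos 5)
Your proof is correct and takes essentially the same route as the paper: the paper also uses (A2) to replace $M_n$ by $n^{-1/2}B_n\circ L$, with error $O_p(n^{-1/2+1/q})$ in the supremum norm, which after the $n^{2/3}$ (i.e.\ $n^{1/6}$ times the sup) scaling is $o_p(1)$ because $q>6$, and then identifies the limit of the rescaled Gaussian increment process using the Taylor expansion of $L$. The only (minor) difference is in how that Gaussian part is handled: you argue via direct covariance convergence (absorbing the Brownian-bridge correction, which is indeed $n^{1/3}\cdot O(n^{-2/3})=o(1)$) together with Kolmogorov's tightness criterion, whereas the paper, following Lemma~1.1 of \citet{kulikov-lopuhaa2006SPL}, first converts the bridge to a Brownian motion $W_n$ and then uses Brownian scaling and path continuity to write $X_{nt}(s)\stackrel{d}{=}W(L'(t)s)+R_n(s)$ with $\sup_{s\in I}|R_n(s)|\to 0$ in probability on compacts.
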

{Since $n^{2/3}(\Lambda(t+sn^{-1/3})-\Lambda(t))\approx n^{1/3}\lambda(t)s+\lambda'(t)s^2/2$
and $D_I$ is invariant under addition of linear functions,
it follows that the process $A_n$ can be approximated by a Brownian motion with a parabolic drift.
The idea now is to use continuity of $D_I$, for a suitably chosen interval~$I=[-d,d]$,
to show that~$D_{I}E_{nt}$  converges to $D_I Z_t$,
where
\begin{equation}
\label{def:Zt}
\begin{split}
E_{nt}(s)
&=
n^{2/3}\Lambda_n(t+sn^{-1/3})\\
Z_t(s)
&=
W(L'(t)s)+\lambda'(t)s^2/2.
\end{split}
\end{equation}
In order to relate this to the processes $\zeta_{nt}$ and $\zeta$ in Theorem~\ref{theo:limit process},
note that $A_n(t+sn^{-1/3})=[D_{I_{nt}}E_{nt}](s)$, 
where $I_{nt}=[-tn^{1/3},(1-t)n^{1/3}]$,
and by Brownian scaling,
the process $Z(s)$ has the same distribution as the process $c_1(t)Z_t(c_2(t)s)$.
This means that we must compare the concave majorants of $E_{nt}$ on the intervals $I_{nt}$ and $I$,
as well as the concave majorants of $Z_t$ on the interval~$I$ and $\R$.
Lemma~1.2 in~\citet{kulikov-lopuhaa2006SPL} shows that, locally, with high probability, 
both concave majorants of the process $Z_t$ coincide on $[-d/2,d/2]$, for large $d>0$.
A similar result is established for the concave majorants of the process $E_{nt}$
in Lemma~\ref{lemma:probability_N_W}, which is analogous to Lemma~1.3 in~\citet{kulikov-lopuhaa2006SPL}.
As a preparation for Theorem~\ref{theo:L_p_lcm}, the lemma also contains a similar result for a Brownian motion version of $E_{nt}$.}

Let~$B_n$ be as in assumption~(A2) and let $\xi_n$ be a $N(0,1)$ distributed random variable
independent of $B_n$, if $B_n$ is a Brownian bridge, and $\xi_n=0$, when $B_n$ is a Brownian motion.
Define versions $W_n$ of a Brownian motion by $W_n(t)=B_n(t)+\xi_nt$, for $t\in[0,1]$,
and define
\begin{equation}
\label{def:AnW}
A_n^W
=
n^{2/3}
\left(
\CM_{[0,1]}\Lambda^W_n-\Lambda_n^W
\right)
\end{equation}
where $\Lambda_n^W(t)=\Lambda(t)+n^{-1/2}W_n(L(t))$,
with $L$ as in assumption (A2).
Furthermore, define $E_n=\sqrt{n}(\Lambda_n-\Lambda)$, $\Lambda_n^E=\Lambda_n$, $A_n^E=A_n$. 
{The superscripts $E$ and $W$ refer to the empirical and Brownian motion version.
For $d>0$,} let $I_{nt}(d)=[0,1]\cap [t-dn^{-1/3},t+dn^{-1/3}]$ and, for $J=E,W$, define the event
\begin{equation}
\label{def:N^J_nt}
N_{nt}^J(d)
=
\left\{
[\CM_{[0,1]}\Lambda_n^J](s)=[\CM_{I_{nt}(d)}\Lambda_n^J](s),\text{ for all }s\in I_{nt}(d/2) 
\right\}.
\end{equation}
Let $I_{nt}=I_{nt}(\log n)$ and $N^J_{nt}=N^J_{nt}(\log n)$.

\begin{lemma}
	\label{lemma:probability_N_W}
	Assume that assumptions (A1)-(A2) hold.
	For $d>0$, let $N^J_{nt}(d)$ be the event defined in \eqref{def:N^J_nt}.
	There exists $C>0$, independent of $n$, $t$, $d$, such that
	\[
	\begin{split}
	\p\left((N^W_{nt}(d))^c\right)
	&=
	O\left(\mathrm{e}^{-Cd^3} \right)\\
	\p\left((N^E_{nt}(d))^c\right)
	&=
	O\left(n^{1-q/3}d^{-2q}+\mathrm{e}^{-Cd^3} \right),
	\end{split}
	\]
	where $q$ is from assumption (A2).
\end{lemma}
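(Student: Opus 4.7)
The plan is to rewrite the event $(N^J_{nt}(d))^c$ in terms of touchpoints of the concave majorant. For a function $h$ on $[0,1]$, let $\tau^-_h(s)$ and $\tau^+_h(s)$ denote respectively the largest touchpoint of $\CM_{[0,1]}h$ at most $s$ and the smallest at least $s$. A short chord argument, using that $\CM_{I_{nt}(d)}h\ge h$ and is concave while $\CM_{[0,1]}h|_{I_{nt}(d)}\ge\CM_{I_{nt}(d)}h$, shows that $N^J_{nt}(d)$ holds as soon as $\tau^\pm_{\Lambda_n^J}(s)\in I_{nt}(d)$ for every $s\in I_{nt}(d/2)$. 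By monotonicity of $\tau^\pm_h$ in $s$, this follows from the two endpoint conditions
\[
\tau^-_{\Lambda_n^J}(t-dn^{-1/3}/2)\ge t-dn^{-1/3}\quad\text{and}\quad \tau^+_{\Lambda_n^J}(t+dn^{-1/3}/2)\le t+dn^{-1/3}.
\]
Hence it suffices to bound the probability that some chord of $\CM_{[0,1]}\Lambda_n^J$ supporting a point of $I_{nt}(d/2)$ reaches outside $I_{nt}(d)$.

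For $J=W$, I would pass to the local scale via $s=t+c_2(t)un^{-1/3}$. A second-order Taylor expansion of $\Lambda$ at $t$, combined with the Brownian self-similarity of $W_n\circ L$ and the normalization in~\eqref{def:c1c2} (for which $c_1(t)c_2(t)^2|\lambda'(t)|/2=1$ and $c_1(t)^2c_2(t)L'(t)=1$), shows that $c_1(t)n^{2/3}(\Lambda_n^W(t+c_2(t)un^{-1/3})-\Lambda_n^W(t))$ equals a process distributed as $Z(u)=W(u)-u^2$ (cf.~\eqref{def:Z and zeta}) up to an added linear term in $u$, which is invisible to the concave-majorant operator, and a Taylor remainder that is uniformly $o(1)$ on $|u|\le d$ for $d$ up to a polylogarithmic order in $n$. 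The translated question for $Z$ on $[-d/c_2(t),d/c_2(t)]$ versus $[-d/(2c_2(t)),d/(2c_2(t))]$ is then exactly the setting of Lemma~1.2 of~\cite{kulikov-lopuhaa2006SPL}, which delivers the bound $O(\mathrm{e}^{-Cd^3})$.

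For $J=E$, I would couple $\Lambda_n^E$ with $\Lambda_n^W$ through assumption~(A2). Writing $\Lambda_n^E-\Lambda_n^W=(M_n-n^{-1/2}B_n\circ L)-n^{-1/2}\xi_n L$ and noting that the linear second term has no effect on the LCM, the uniform remainder $\Delta_n=\sup_s|M_n(s)-n^{-1/2}B_n(L(s))|$ obeys, by (A2) applied with $x=d^2n^{1/3-1/q}$, the tail bound $\p(\Delta_n>d^2n^{-2/3})\le C_q n^{1-q/3}d^{-2q}$. On the complementary event $\{\Delta_n\le d^2n^{-2/3}\}$, the pointwise perturbation is of the same order as, but bounded by, the concavity gap of $\Lambda$ over windows of length $dn^{-1/3}$, which is $\Omega(d^2n^{-2/3})$ by $|\lambda'|\ge\inf|\lambda'|>0$. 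A curvature argument applied to the argmax representation $\tau^-_h(s)=\argmax_{u\le s}\{h(u)-a(s)u\}$ then yields an inclusion $N^W_{nt}(cd)\subseteq N^E_{nt}(d)$ for some universal $c\in(0,1)$, and combining with the bound for $J=W$ gives the claimed estimate (the factor $c$ only rescales the constant in the exponential).

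The main obstacle is this final geometric transfer. Although the concave-majorant operator is $1$-Lipschitz in the supremum norm, the location of its touchpoints is far more sensitive to perturbations of the underlying function; a naive application of the Lipschitz bound would not preserve the event $N_{nt}^J(d)$. What saves the day is the strict concavity of $\Lambda$: a uniform perturbation of size $\Delta_n=O(d^2n^{-2/3})$ can shift the argmax of $h(u)-a(s)u$ by at most $O(\sqrt{\Delta_n/|\lambda'|})=O(dn^{-1/3})$, so touchpoints lying inside the Brownian window $I_{nt}(cd)$ cannot migrate outside the slightly enlarged empirical window $I_{nt}(d)$. Making this quantitative, with care for how the slopes $a(s)$ also shift under the perturbation, is the technical heart of the argument.
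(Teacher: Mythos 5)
Your reduction of $(N^J_{nt}(d))^c$ to touchpoint (equivalently, constancy-of-slope) events is sound and matches the paper's first step, and your choice $x=d^2n^{1/3-1/q}$ in (A2) does produce the correct order $n^{1-q/3}d^{-2q}$. However, both probabilistic bounds you then invoke have genuine gaps. For $J=W$: after localization, the rescaled process is \emph{not} exactly $Z(u)=W(u)-u^2$; it is a Brownian motion in the time change $n^{1/3}(L(t+\cdot)-L(t))$ plus a drift whose deviation from a parabola is only $o(1)$ on the window. Since (as you yourself stress in your last paragraph) the coincidence-of-majorants event is not stable under sup-norm perturbations, you cannot simply declare the problem to be "exactly the setting of Lemma~1.2 of Kulikov--Lopuha\"a (2006)", which concerns the limit process $Z$ itself (and moreover compares $\CM_{\R}$ with $\CM_{[-d,d]}$, not two finite windows). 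What the paper does instead is apply the switching relation $\widehat\lambda^W_n(t)\le a\Leftrightarrow U^W_n(a)\le t$ to convert the constancy events into tail events for the inverse process $V^W_n(a)$, writes $V^W_n(a)$ as an argmax of $W$ minus a drift $D_{a,n}$ satisfying $D_{a,n}(t)\ge c_0t^2$ (this is where the Taylor expansion and the assumptions on $\lambda$, $L$ enter, without ever needing the drift to be a parabola), and then invokes Theorem~4 of Durot (2002) to get $\p(V^W_n(a)>x)\le C_1\mathrm{e}^{-C_2x^3}$.

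For $J=E$ the gap is more serious. Your transfer $N^W_{nt}(cd)\cap\{\Delta_n\le d^2n^{-2/3}\}\subseteq N^E_{nt}(d)$ rests on the claim that a sup-norm perturbation of size $\Delta$ moves the argmax of $u\mapsto h(u)-a u$ by at most $O(\sqrt{\Delta/\inf|\lambda'|})$. That bound is a consequence of strict concavity of $h$, but here $h=\Lambda^W_n$ contains the Brownian part and is not concave, so the bound is false as a deterministic statement: if the noisy process happens to be nearly affine over a long stretch, an arbitrarily small perturbation can move touchpoints (and argmaxes) a long way. Ruling this out is a probabilistic statement whose quantitative form is essentially the tail bound for the empirical inverse process that your argument is missing. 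The paper bypasses the transfer entirely: it applies the switching relation directly to $\widehat\lambda_n$ and $U_n(a)=\argmax_{t\in[0,1]}\{\Lambda_n(t)-at\}$, and cites Lemma~6.4 of Durot, Kulikov and Lopuha\"a (2012), which gives $\p(V_n(a)>x)\le C_1n^{1-q/3}x^{-2q}+2\mathrm{e}^{-C_2x^3}$ and hence the stated bound at once. To repair your proof you would either need to prove such a tail bound yourself (at which point the detour through the $W$ event is unnecessary) or replace the heuristic argmax-stability step by a genuine argument; as it stands, the "technical heart" you identify is exactly the part that is not done.
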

{The proof of Theorem~\ref{theo:limit process} now follows the same line of reasoning as that of Theorem~1.1 in~\citet{kulikov-lopuhaa2006SPL},
	see Section~\ref{sec:proofs} for more details.}
	The next step is to deal with the $L_p$ norm. 
	Our main result is the following.
\begin{theo}
\label{theo:L_p_lcm}
Suppose that assumptions (A1)-(A2) are satisfied and let 
{$A_n$ and~$\zeta$
be defined by~\eqref{def:An} and~\eqref{def:Z and zeta}, respectively.}
Let $\mu$ be a measure on the Borel sets of $\R$, such that
\begin{enumerate}
	\item[(A3)]
	$\dd\mu(t)=w(t)\,\dd t,$ where $w(t)\geq 0$ is differentiable with bounded derivative on $[0,1]$.
\end{enumerate}
Then, for all $1\leq p< \min(q,2q-7)$, (with $q$ as in assumption (A2)),
\[
n^{1/6}\left(\int_0^1 A_n(t)^p\,\dd \mu(t)-m\right)\xrightarrow{d} N(0,\sigma^2),
\]
where
\[
m=\E\left[\zeta(0)^p \right]\int_0^1\frac{2^{p/3}L'(t)^{2p/3}}{|\lambda'(t)|^{p/3}}\,\dd \mu(t)
\]
and
\[
\sigma^2
=
\int_0^1\frac{2^{(2p+5)/3}L'(t)^{(4p+1)/3}}{|\lambda'(t)|^{(2p+2)/3}}w^2(t)\,\dd t
\int_0^\infty \mathrm{cov}\left(\zeta(0)^p,\zeta(s)^p\right)\,\dd s.
\]
\end{theo}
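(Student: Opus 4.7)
The plan is to adapt the strategy of~\cite{kulikov-lopuhaa2008} to the general setting of~\cite{durot2007}. I would proceed in three steps: first, replace the empirical process $A_n$ by its Brownian motion analogue $A_n^W$ from~\eqref{def:AnW} at negligible cost; second, compute the asymptotic mean and variance of $\int_0^1 A_n^W(t)^p\,d\mu(t)$ via local Brownian scaling and Lemma~\ref{lemma:probability_N_W}; third, establish asymptotic normality by a block decomposition in which contributions from well-separated points are approximately independent.

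For the reduction in the first step, I would use that the LCM operator is $1$-Lipschitz in the supremum norm, so that $\sup_t|A_n(t)-A_n^W(t)|\leq 2n^{2/3}\sup_t|\Lambda_n(t)-\Lambda_n^W(t)|$. Assumption~(A2) bounds $\sup|M_n-n^{-1/2}B_n\circ L|=O_p(n^{-1+1/q})$; the extra term $n^{-1/2}\xi_n L(t)$ present in the Brownian bridge case would be handled by restricting to the local intervals $I_{nt}(\log n)$ provided by Lemma~\ref{lemma:probability_N_W} and Taylor expanding $L$, leaving only a quadratic remainder of order $n^{-1/2}(n^{-1/3}\log n)^2$ that is invisible to the LCM difference (affine changes in $\Lambda_n^W$ cancel in $\CM\Lambda_n^W-\Lambda_n^W$). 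Combined with $|x^p-y^p|\leq p(|x|\vee|y|)^{p-1}|x-y|$ and $L^p$-moment control on $A_n$ and $A_n^W$ (from the Kiefer-Wolfowitz analogue of~\cite{DurotLopuhaa14} available in this framework and from the tail of $\zeta$), this would give $n^{1/6}\int_0^1|A_n(t)^p - A_n^W(t)^p|\,d\mu(t)\to 0$ in probability. The constraints $p<q$ and $p<2q-7$ are exactly what make this bookkeeping work.

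For the mean, Lemma~\ref{lemma:probability_N_W} combined with Brownian scaling would identify the local law of $c_1(t)A_n^W(t+c_2(t)sn^{-1/3})$ with that of $\zeta(s)$ modulo a vanishing error, giving $\E[A_n^W(t)^p]\to c_1(t)^{-p}\E[\zeta(0)^p]$; integrating against $d\mu$ via dominated convergence yields~$m$. For the variance, Lemma~\ref{lemma:probability_N_W} would ensure that $A_n^W(t_1)$ and $A_n^W(t_2)$ depend on disjoint increments of $W_n\circ L$ whenever $|t_1-t_2|>2(\log n)n^{-1/3}$, so after the change of variables $t_2=t_1+c_2(t_1)sn^{-1/3}$ and another Brownian scaling, dominated convergence would give $n^{1/3}\mathrm{Var}[\int_0^1 A_n^W(t)^p\,d\mu(t)]\to\sigma^2$, with the constants matching the claimed formula after simplifying $c_1(t)^{-2p}c_2(t)$. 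I would then partition $[0,1]$ into alternating big blocks of length $\ell_n\to 0$ with $n^{1/3}\ell_n\to\infty$ and small separating blocks of length $s_n=2(\log n)n^{-1/3}$; the small-block contribution would be $o_p(n^{-1/6})$ by the $L^p$ moment bound, while the centred big-block sum is, up to a vanishing probability event controlled by Lemma~\ref{lemma:probability_N_W}, a sum of independent terms of variance summing to $\sigma^2$, to which the Lindeberg CLT applies.

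The main obstacle will be the first step: making the empirical-to-Brownian reduction truly negligible at the fine scale $n^{-1/6}$ demands sharp tail and moment control on $\sup_t A_n(t)$ and $\sup_t A_n^W(t)$, and disposing of the $\xi_n L(t)$ correction in the Brownian bridge case essentially uses both the smoothness of $L$ from~(A2) and the local-LCM property from Lemma~\ref{lemma:probability_N_W}. The constraint $p<2q-7$ is tight at precisely this point.
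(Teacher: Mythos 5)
Your plan follows essentially the same route as the paper, which itself adapts \cite{kulikov-lopuhaa2008}: localize the concave majorant via the events $N^J_{nt}(\log n)$ of Lemma~\ref{lemma:probability_N_W}, use invariance of the map $h\mapsto \CM h-h$ under affine additions together with Taylor expansions of $\Lambda$ and $L$ (this is how the $\xi_nL(t)$ term in the Brownian-bridge case disappears, exactly as in the paper's Lemma~\ref{lemma:4.1}), pass from $A_n$ to the Brownian version $A_n^W$, identify the local limit $\zeta$ by Brownian scaling, and finish with a mixing/local-independence argument and a big-blocks--small-blocks Lindeberg step. Your identification of where $p<q$ and $p<2q-7$ enter, and the constant bookkeeping $c_1(t)^{-p}$ for the mean and $c_1(t)^{-2p}c_2(t)$ (with the extra factor $2$ from the two-sided covariance integral) for the variance, are consistent with the paper's Lemmas~\ref{lemma:4.2}--\ref{lemma:variance_brownian_version}.

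One step, as written, is too weak. For the mean you claim $\E[A_n^W(t)^p]\to (2L'(t)^2/|\lambda'(t)|)^{p/3}\E[\zeta(0)^p]$ and then integrate by dominated convergence; but the theorem centers $\int_0^1 A_n(t)^p\,\dd\mu(t)$ at the constant $m$ and scales by $n^{1/6}$, so you need $n^{1/6}\bigl(\int_0^1\E[A_n^W(t)^p]\,\dd\mu(t)-m\bigr)\to0$, i.e.\ the moment approximation must carry a uniform $o(n^{-1/6})$ error in $t$; pointwise convergence plus dominated convergence gives no rate. This is precisely what the paper's Lemmas~\ref{lemma:4.2} and~\ref{lemma:4.4} provide, including the boundary strip $t\notin(n^{-1/3}\log n,\,1-n^{-1/3}\log n)$, where only a one-sided bound holds but whose $\mu$-measure $O(n^{-1/3}\log n)$ is negligible. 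Your own ingredients (exceptional probabilities $O(n^{1-q/3}(\log n)^{-2q}+\mathrm{e}^{-C(\log n)^3})$ and Taylor remainders of order $n^{-1/3}(\log n)^{3}$ and $n^{-1/3+1/q}$) do yield this rate, but it has to be stated and tracked uniformly in $t$, not replaced by dominated convergence. Relatedly, the paper effects the empirical-to-Brownian reduction through the uniform-in-$t$ expectation bound $\E|A_n^E(t)^p-A_n^W(t)^p|=o(n^{-1/6})$ (Lemma~\ref{lemma:4.5}) rather than a global sup-norm bound combined with sup-moment control; that choice avoids any appeal to the Kiefer--Wolfowitz result of \cite{DurotLopuhaa14}, which is not needed under (A1)--(A2) and whose hypotheses you would otherwise have to verify.
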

{For the special cases that $\lambda$ is a probability density or a regression function, we recover Theorem~2.1 in~\citet{kulikov-lopuhaa2008}
and Theorem~5.2 in\citet{durottoquet2003}, respectively.}
In order to prove Theorem~\ref{theo:L_p_lcm} we first need some preliminary results. 
We aim at approximating the~$L_p$-norm of $A_n$ by that of the Brownian motion version $A^W_n$ and then finding the asymptotic distribution for the latter one. 
To this end, we first need to relate the moments of $A_n$ to those of~$A_n^W$.
We start by showing that, for $J=E,\,W,$ a rescaled version of $\Lambda^J_n$ can be approximated 
by {the same} process $Y_{nt}$ plus a linear term. This result corresponds to Lemma 4.1 in~\cite{kulikov-lopuhaa2008}.
\begin{lemma}
\label{lemma:4.1}
Suppose that  assumptions (A1)-(A2) are satisfied. 
Then, for $t\in(0,1)$ fixed, for $J=E,W,$ and $s\in[-tn^{1/3},(1-t)n^{1/3}]$, it holds
$n^{2/3}\Lambda_n^J(t+n^{-1/3}s)=Y_{nt}(s)+L_{nt}^J(s)+R_{nt}^J(s)$,
where~$L_{nt}^J(s)$ is linear in $s$ and
$Y_{nt}(s)
=
n^{1/6}
\left\{
W_n(L(t+n^{-1/3}s))-W_n(L(t))
\right\}
+
\frac{1}{2}\lambda'(t)s^2$.
Moreover, for all $p\geq 1$,
\[
\E\left[\sup_{|s|\leq \log n}\left|R_{nt}^W(s) \right|^p \right]
=
O\left(n^{-p/3}(\log n)^{3p} \right),
\]
uniformly in $t\in(0,1)$.
If, in addition $1\leq p<q$ (with $q$ as in assumption (A2)), then
\[
\E\left[\sup_{|s|\leq \log n}\left|R_{nt}^E(s) \right|^p \right]
=O\left(n^{-p/3+p/q}\right)
\]
uniformly in $t\in(0,1)$.
\end{lemma}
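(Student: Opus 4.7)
My plan is to derive the decomposition by direct Taylor expansion for $J=W$ and then reduce the empirical case to the Brownian one via the coupling provided by assumption (A2).

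Starting with $J=W$, I substitute the definition $\Lambda_n^W(u)=\Lambda(u)+n^{-1/2}W_n(L(u))$ at $u=t+n^{-1/3}s$. Because $\lambda\in C^2([0,1])$, $\Lambda\in C^3$, so a third-order Taylor expansion gives
\[
n^{2/3}\Lambda(t+n^{-1/3}s)=n^{2/3}\Lambda(t)+n^{1/3}\lambda(t)s+\tfrac{1}{2}\lambda'(t)s^2+\tfrac{1}{6}\lambda''(\tau_s)n^{-1/3}s^3
\]
for some $\tau_s$ between $t$ and $t+n^{-1/3}s$. Adding and subtracting $n^{1/6}W_n(L(t))$ turns the Brownian piece into the increment that, together with $\tfrac{1}{2}\lambda'(t)s^2$, forms $Y_{nt}(s)$. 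The constant and linear-in-$s$ terms combine into $L_{nt}^W(s)=n^{2/3}\Lambda(t)+n^{1/3}\lambda(t)s+n^{1/6}W_n(L(t))$, and what remains is $R_{nt}^W(s)=\tfrac{1}{6}\lambda''(\tau_s)n^{-1/3}s^3$. For $|s|\le\log n$ this is bounded deterministically by $\tfrac{1}{6}\sup_u|\lambda''(u)|\,n^{-1/3}(\log n)^3$, which is uniform in $t$ and immediately yields the stated $O(n^{-p/3}(\log n)^{3p})$ moment bound.

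For $J=E$, I write $\Lambda_n=\Lambda+M_n$ and use (A2) to split $M_n(u)=n^{-1/2}B_n(L(u))+\varepsilon_n(u)$. Since $W_n=B_n+\xi_n\cdot\mathrm{id}$,
\[
n^{2/3}\Lambda_n(u)=n^{2/3}\Lambda(u)+n^{1/6}W_n(L(u))-n^{1/6}\xi_n L(u)+n^{2/3}\varepsilon_n(u).
\]
Applying at $u=t+n^{-1/3}s$ the same expansion of $\Lambda$, together with a second-order Taylor expansion $L(t+n^{-1/3}s)=L(t)+L'(t)n^{-1/3}s+\tfrac{1}{2}L''(\eta_s)n^{-2/3}s^2$ (using $L\in C^2$), the Brownian increment and the quadratic term reproduce $Y_{nt}(s)$, the remaining constant and linear-in-$s$ pieces combine into
\[
L_{nt}^E(s)=n^{2/3}\Lambda(t)+n^{1/3}\lambda(t)s+n^{1/6}W_n(L(t))-n^{1/6}\xi_n L(t)-\xi_n L'(t)n^{-1/6}s,
\]
and the remainder reads
\[
R_{nt}^E(s)=\tfrac{1}{6}\lambda''(\tau_s)n^{-1/3}s^3-\tfrac{1}{2}\xi_n L''(\eta_s)n^{-1/2}s^2+n^{2/3}\varepsilon_n(t+n^{-1/3}s).
\]

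For $|s|\le\log n$, the first two pieces of $R_{nt}^E$ are bounded by $Cn^{-1/3}(\log n)^3$ and $C|\xi_n|n^{-1/2}(\log n)^2$; as $\xi_n\sim N(0,1)$ has moments of all orders, their $p$-th moment contributions are $O(n^{-p/3}(\log n)^{3p})$ and $O(n^{-p/2}(\log n)^{2p})$, both of smaller order than $n^{-p/3+p/q}$. The main obstacle is the coupling piece, which requires $\E[\sup_{u\in[0,1]}|\varepsilon_n(u)|^p]=O(n^{p/q-p})$ so that multiplication by $n^{2p/3}$ gives the announced $O(n^{-p/3+p/q})$. Rewriting (A2) as $\p(\sup|\varepsilon_n|>v)\le C_q n^{1-q}v^{-q}$ for $v\in(0,n^{1/q}]$, I would evaluate $\E[\sup|\varepsilon_n|^p]=\int_0^\infty pv^{p-1}\p(\sup|\varepsilon_n|>v)\,\dd v$ by splitting at the crossover $v_0=C_q^{1/q}n^{1/q-1}$ between the trivial bound $1$ and the polynomial tail: the range $v<v_0$ contributes $v_0^p=O(n^{p/q-p})$; the range $v_0\le v\le n^{1/q}$ contributes the same order provided $p<q$; and the extreme tail $v>n^{1/q}$ is controlled by the crude bound $\sup|\varepsilon_n|\le\sup|M_n|+n^{-1/2}\sup|B_n\circ L|$ combined with the Gaussian tail of $\sup|B_n|$ (noting $\sup|M_n|$ is almost surely bounded in the models covered by (A2)), contributing only a super-polynomially small term. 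All estimates are uniform in $t$, which completes the plan; the delicacy lies entirely in this moment integral and in verifying that the restriction $x\in(0,n]$ in (A2) does not bind on the relevant range of $v$.
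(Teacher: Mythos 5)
Your proof is essentially the paper's own: for $J=W$ a third-order Taylor expansion of $\Lambda$ yielding $R_{nt}^W(s)=\tfrac16\lambda''(\theta)n^{-1/3}s^3$ and the deterministic bound, and for $J=E$ the same splitting via the coupling in (A2) together with a second-order expansion of $L$, producing the same three remainder pieces and the same moment computation for the coupling error (the paper bounds $\E[S_n^p]$, $S_n=\sup_s|E_n(s)-B_n(L(s))|$, by integrating the tail bound, which is exactly your $\E[\sup|\varepsilon_n|^p]=O(n^{p/q-p})$ after rescaling). The one place you deviate is the extreme tail beyond the range where (A2) applies: the paper simply integrates $C_qx^{-q}$ over all $x\ge 1$, tacitly using the bound past $x=n$, whereas you patch that range by asserting that $\sup|M_n|$ is almost surely bounded "in the models covered by (A2)" — this is not implied by (A1)--(A2) in the general setting (nothing bounds a generic cadlag step estimator), so as written that step imports an assumption the lemma does not have; the subtlety you flag about $x\in(0,n]$ is real, but your fix should either be dropped in favour of the paper's (admittedly glossed) treatment or replaced by an argument that uses only the stated assumptions.
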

Since the map $D_I$ is invariant under addition of linear terms, Lemma~\ref{lemma:4.1} allows us to approximate the moments of $A^J_n(t)=n^{2/3}D_{[0,1]}\Lambda^J_n$ 
by those of $[D_{H_{nt}}Y_{nt}](0)$ for some interval $H_{nt}$, as in Lemma~4.2 in~\cite{kulikov-lopuhaa2008}.
\begin{lemma}
\label{lemma:4.2}
Suppose that  assumptions (A1)-(A2) are satisfied.
{and let $Y_{nt}$ be the process defined in Lemma~\ref{lemma:4.1}.}
Define $H_{nt}=[-n^{1/3}t,n^{1/3}(1-t)]\cap[-\log n,\log n]$.
Then for all $p\geq 1$, it holds
\[
\E\left[A^W_n(t)^p\right]
=
\E\left[\left[\mathrm{D}_{H_{nt}}Y_{nt}\right](0)^p\right]+o\left(n^{-1/6}\right),
\]
uniformly for $t\in(0,1)$.
If, in addition $1\leq p<\min(q,2q-7)$, with $q$ from condition (A2),
then also
\[
\E\left[A^E_n(t)^p\right]=\E\left[\left[\mathrm{D}_{H_{nt}}Y_{nt}\right](0)^p\right]+o\left(n^{-1/6}\right),
\]
uniformly for $t\in(0,1)$.
\end{lemma}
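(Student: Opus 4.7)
My plan is to reduce the moments of $A_n^J(t)^p$ to those of $[D_{H_{nt}}Y_{nt}](0)^p$ in two stages: strip off the remainder term $R_{nt}^J$ supplied by Lemma~\ref{lemma:4.1}, and restrict the interval of the concave-majorant operator from $I_{nt}^{*}:=[-tn^{1/3},(1-t)n^{1/3}]$ down to $H_{nt}$. The starting point is the scaling identity
\[
A_n^J(t)=\bigl[D_{I_{nt}^{*}}\widetilde{\Lambda}_n^J\bigr](0),\qquad \widetilde{\Lambda}_n^J(s):=n^{2/3}\Lambda_n^J(t+n^{-1/3}s),
\]
which holds because $\CM$ is equivariant under positive vertical scaling and affine reparametrization of the argument. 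Lemma~\ref{lemma:4.1} writes $\widetilde{\Lambda}_n^J=Y_{nt}+L_{nt}^J+R_{nt}^J$ with $L_{nt}^J$ linear in $s$, and since $D_I$ is invariant under addition of linear functions, $A_n^J(t)=[D_{I_{nt}^{*}}(Y_{nt}+R_{nt}^J)](0)$.

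On the event $N_{nt}^J=N_{nt}^J(\log n)$ of Lemma~\ref{lemma:probability_N_W}, the concave majorants of $\Lambda_n^J$ on $[0,1]$ and on $I_{nt}(\log n)$ coincide at $t$; after the rescaling $s=n^{1/3}(u-t)$ the set $I_{nt}(\log n)$ becomes $H_{nt}$, and therefore
\[
A_n^J(t)\,\1_{N_{nt}^J}=\bigl[D_{H_{nt}}(Y_{nt}+R_{nt}^J)\bigr](0)\,\1_{N_{nt}^J}.
\]
Combining $|[D_{H_{nt}}(f+g)](0)-[D_{H_{nt}}f](0)|\le 2\sup_{H_{nt}}|g|$ with $|a^p-b^p|\le p(|a|+|b|)^{p-1}|a-b|$ and H\"older's inequality with conjugate pair $(p/(p-1),p)$, the difference $\bigl|\E[A_n^J(t)^p\,\1_{N_{nt}^J}]-\E[[D_{H_{nt}}Y_{nt}](0)^p\,\1_{N_{nt}^J}]\bigr|$ is bounded by a constant multiple of
\[
\E\bigl[\sup_{H_{nt}}|R_{nt}^J|^p\bigr]^{1/p}\Bigl(\E[A_n^J(t)^p]+\E[[D_{H_{nt}}Y_{nt}](0)^p]\Bigr)^{(p-1)/p}.
\]
By Lemma~\ref{lemma:4.1} the first factor is $O(n^{-1/3}(\log n)^3)$ for $J=W$ and $O(n^{-1/3+1/q})=o(n^{-1/6})$ for $J=E$, the latter relying on $q>6$.

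The complementary event I would handle through Cauchy--Schwarz,
\[
\E\bigl[A_n^J(t)^p\,\1_{(N_{nt}^J)^c}\bigr]\le \E\bigl[A_n^J(t)^{2p}\bigr]^{1/2}\,\p\bigl((N_{nt}^J)^c\bigr)^{1/2},
\]
and likewise for the $[D_{H_{nt}}Y_{nt}](0)^p$ piece. For $J=W$ the probability decays faster than any polynomial in $n$ by Lemma~\ref{lemma:probability_N_W}, so any polynomial moment bound on $A_n^W(t)^{2p}$ suffices. For $J=E$ one has $\p((N_{nt}^E)^c)^{1/2}=O(n^{1/2-q/6}(\log n)^{-q})$, so $\E[A_n^E(t)^{2p}]$ must grow no faster than $n^{q/3-4/3-\varepsilon}$, and the resulting balance of exponents is what produces the threshold $p<\min(q,2q-7)$.

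The main obstacle is establishing these uniform-in-$t$ moment bounds on $A_n^J(t)^{2p}$, since the naive estimate $A_n^J(t)\le 2n^{2/3}\sup_{[0,1]}|\Lambda_n^J-\Lambda|$ together with (A2) is too crude to span the whole range. I would bootstrap: for $J=W$, on $N_{nt}^W$ one already has $A_n^W(t)\le [D_{H_{nt}}Y_{nt}](0)+2\sup_{H_{nt}}|R_{nt}^W|$, whose moments are bounded uniformly in $n,t$ because the parabolic drift in $Y_{nt}$ confines the concave-majorant excursion to a bounded region of~$\R$; outside $N_{nt}^W$ a crude deterministic bound combined with the super-polynomial decay of $\p((N_{nt}^W)^c)$ closes the gap. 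The empirical case is then obtained by a localized comparison to the Brownian version via the coupling in (A2), where careful bookkeeping of the exponents from the remainder tail $O(n^{-1+1/q})$, the coupling probability $O(n^{1-q/3})$, and the moment exponent $2p$ is precisely what forces the $2q-7$ restriction.
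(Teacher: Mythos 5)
Your overall architecture is the same as the paper's: split according to the localization event $N_{nt}^J$ of Lemma~\ref{lemma:probability_N_W}, use on $N_{nt}^J$ the representation $A_n^J(t)=[\mathrm{D}_{H_{nt}}(Y_{nt}+R_{nt}^J)](0)$ coming from Lemma~\ref{lemma:4.1} and linear invariance of $\mathrm{D}$, control the difference of $p$-th powers through the sup of $R_{nt}^J$ and moment bounds, and treat the complementary event by a crude bound times a small probability. For $J=W$ your treatment (Cauchy--Schwarz plus the super-polynomial decay of $\p((N_{nt}^W)^c)$) is exactly the paper's.

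The genuine gap is in the complementary event for $J=E$, which is precisely where the restriction $p<\min(q,2q-7)$ must be produced. Your plan is Cauchy--Schwarz, $\E[A_n^E(t)^p\1_{(N_{nt}^E)^c}]\le \E[A_n^E(t)^{2p}]^{1/2}\p((N_{nt}^E)^c)^{1/2}$, which requires a uniform bound on the $2p$-th moment of $A_n^E(t)$. But any available bound on $A_n^E(t)$ goes through $n^{2/3}\sup_{s}|\Lambda_n(s)-\Lambda(s)|$, and via (A2) this involves the $2p$-th moment of the coupling error $S_n$, which is controlled only for orders strictly below $q$; for $q<2p$ (allowed in the claimed range, e.g.\ $q=8$, $p=7$) assumption (A2) gives no such moment. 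Even granting it, the exponents do not close: one gets at best $\E[A_n^E(t)^{2p}]=O(n^{p/3})$, and $n^{p/6}\cdot n^{1/2-q/6}=o(n^{-1/6})$ forces $p<q-4$ (together with $2p<q$), which is strictly smaller than $\min(q,2q-7)$ for $q>6$; so the ``careful bookkeeping'' you defer cannot yield the stated threshold by this route. (A comparison of $A_n^E$ with $A_n^W$ does not rescue it either: when $B_n$ is a Brownian bridge, the global difference contains the non-linear term $n^{-1/2}\xi_n L(\cdot)$, of size $n^{1/6}$ after scaling.) The paper avoids the fixed exponent $2$: it bounds $A_n^E(t)^p\le 2^pn^{2p/3}\sup|\Lambda_n^E-\Lambda|^p$, splits the sup into the coupling error plus $n^{-1/2}\sup|W_n\circ L|$, and applies H\"older with conjugate pair $(\ell,\ell')$ with $\ell'$ arbitrarily close to $1$, so that only moments of order $p\ell<q$ of the coupling error are needed while essentially the full decay $n^{1-q/3}(\log n)^{-2q}$ of $\p((N_{nt}^E)^c)$ is retained; the condition $p<2q-7$ then emerges from balancing $n^{2p/3}\cdot n^{-p/2}$ against $n^{1-q/3}$, and $p<q$ from the coupling moments. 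You would need to replace your Cauchy--Schwarz step by this (or an equivalent) argument to cover the full claimed range of $p$.
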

The process $Y_{nt}$ has the same distribution as 
\begin{equation}
\label{def:tilde_Y}
\tilde{Y}_{nt}=W\left(n^{1/3}\left(L\left(t+n^{-1/3}s\right)-L(t)\right)\right)+\frac{1}{2}\lambda'(t)s^2,
\end{equation}
which is close to the process {$Z_t$ in~\eqref{def:Zt}} by continuity of Brownian motion. 
Lemma 4.3 in~\cite{kulikov-lopuhaa2008} is then  used to  show that {the} concave majorants at zero are sufficiently close. 
Note that, with by Brownian scaling, the process $c_1(t)Z_t(c_2(t)s)$ has the same distribution as the process $Z(s)$. 
As a consequence of Lemma~\ref{lemma:4.2} the moments of $A^J_n(t)$ can be related to those of the process $\zeta$.
This formulated in the next lemma, which corresponds to Lemma~4.4 in~\citet{kulikov-lopuhaa2008}.
\begin{lemma}
\label{lemma:4.4}
Suppose that  assumptions (A1)-(A2) are satisfied.
Then, for all $p\ge1$,
\[
\E\left[A_n^W(t)^p\right]
=
\left(\frac{2L'(t)^2}{|\lambda'(t)|}\right)^{p/3}\E\left[\zeta(0)^p\right]+o\left( n^{-1/6}\right)
\]
uniformly in $t\in(n^{-1/3}\log n,1-n^{-1/3}\log n)$ and
\[
\E\left[A_n^W(t)^p\right]\leq\left(\frac{2L'(t)^2}{|\lambda'(t)|}\right)^{p/3}\E\left[\zeta(0)^p\right]+o\left( n^{-1/6}\right)
\]
uniformly in $t\in(0,1)$.
If, in addition $1\leq p<\min(q,2q-7)$, where $q$ is from assumption (A2), then the same (in)equalities hold for $A_N^E(t)$.
\end{lemma}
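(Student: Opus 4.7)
The plan is to combine Lemma~\ref{lemma:4.2}, which reduces $\E[A_n^W(t)^p]$ to the moments of $[\mathrm{D}_{H_{nt}}Y_{nt}](0)$, with a Brownian-scaling identity that ties those moments to $\E[\zeta(0)^p]$. By Lemma~\ref{lemma:4.2},
\[
\E[A_n^W(t)^p] = \E\left[[\mathrm{D}_{H_{nt}}Y_{nt}](0)^p\right] + o(n^{-1/6})
\]
uniformly in $t\in(0,1)$, and the same for $A_n^E(t)$ when $1\le p<\min(q,2q-7)$. Since $Y_{nt}\stackrel{d}{=}\tilde Y_{nt}$ of \eqref{def:tilde_Y}, I would pass to $\tilde Y_{nt}$.

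Next, I would compare $\tilde Y_{nt}$ with the parabolic-drift process $Z_t$ from \eqref{def:Zt}. A Taylor expansion of $L$ at $t$ gives $n^{1/3}(L(t+n^{-1/3}s) - L(t)) = L'(t)s + O(n^{-1/3}s^2)$, and hence
\[
\tilde Y_{nt}(s) - Z_t(s) = W\bigl(L'(t)s + r_n(s)\bigr) - W(L'(t)s),\qquad |r_n(s)|\le Cn^{-1/3}s^2,
\]
which by the modulus of continuity of Brownian motion is small on any slowly growing interval $[-M,M]$. Lemma~4.3 of \cite{kulikov-lopuhaa2008} then converts such sup-norm closeness into closeness of the concave majorants at $0$, while Lemma~1.2 of \cite{kulikov-lopuhaa2006SPL}, applied after Brownian scaling to $Z_t$, guarantees $[\CM_{[-M,M]} Z_t](0) = [\CM_\R Z_t](0)$ off an event of probability $O(e^{-CM^3})$, with the same conclusion on $H_{nt}$ when $H_{nt}=[-\log n,\log n]$. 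Choosing $M$ appropriately, one obtains
\[
\E\left[[\mathrm{D}_{H_{nt}}\tilde Y_{nt}](0)^p\right] = \E\left[[\mathrm{D}_\R Z_t](0)^p\right] + o(n^{-1/6}),
\]
uniformly for $t\in(n^{-1/3}\log n, 1-n^{-1/3}\log n)$. The choice of $c_1(t), c_2(t)$ in \eqref{def:c1c2} is precisely such that $c_1(t) Z_t(c_2(t) u) \stackrel{d}{=} Z(u)$, so $\E[[\mathrm{D}_\R Z_t](0)^p] = c_1(t)^{-p}\E[\zeta(0)^p] = (2L'(t)^2/|\lambda'(t)|)^{p/3}\E[\zeta(0)^p]$, matching the stated constant and establishing the equality case.

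For the uniform upper bound over $t\in(0,1)$, the only difference is that when $t$ lies within $n^{-1/3}\log n$ of $0$ or $1$, $H_{nt}$ is a proper subinterval of $[-\log n,\log n]$. Extending $W_n$ to a two-sided Brownian motion makes $\tilde Y_{nt}$ well-defined on all of $\R$, and the monotonicity $I\subset I' \Rightarrow [\mathrm{D}_I h](0) \le [\mathrm{D}_{I'} h](0)$ yields $[\mathrm{D}_{H_{nt}}\tilde Y_{nt}](0) \le [\mathrm{D}_{[-\log n, \log n]}\tilde Y_{nt}](0)$; applying the previous analysis to the symmetric enlargement gives the desired upper bound. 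The case $A_n^E$ runs identically via the $A_n^E$ part of Lemma~\ref{lemma:4.2}, and the extra restriction $p<\min(q,2q-7)$ is inherited from there. The main obstacle is making the approximation $\tilde Y_{nt}\to Z_t$ on $H_{nt}$ quantitatively sharp enough to produce the strict $o(n^{-1/6})$ rate in the $p$-th moment of the concave majorant at $0$: the truncation level $M$ has to be balanced so that both the Brownian-modulus bound on $[-M,M]$ and the exponential tail $e^{-CM^3}$ for the localization of the concave majorant of $Z_t$ contribute only $o(n^{-1/6})$ in the relevant $L^p$-norms, uniformly in $t$.
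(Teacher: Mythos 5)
Your outline coincides with the paper's proof in its skeleton (reduce via Lemma~\ref{lemma:4.2} to $[\mathrm{D}_{H_{nt}}Y_{nt}](0)$, pass to $\tilde Y_{nt}$, compare with $Z_t$, localize, then Brownian scaling to get the constant $(2L'(t)^2/|\lambda'(t)|)^{p/3}$, with the boundary case giving only an inequality and the restriction $p<\min(q,2q-7)$ inherited from the $E$-part of Lemma~\ref{lemma:4.2}); those parts are fine. The genuine gap is in your central comparison step, and it is exactly the obstacle you flag at the end but do not resolve — and it cannot be resolved by tuning $M$. You compare $\tilde Y_{nt}$ and $Z_t$ at equal times, so the error is the Brownian increment $W(L'(t)s+r_n(s))-W(L'(t)s)$ over a time gap $|r_n(s)|\le Cn^{-1/3}s^2$. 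Already pointwise at fixed $s\neq 0$ this increment has standard deviation of exact order $n^{-1/6}|s|$, and in supremum over $|s|\le M$ it is of order $n^{-1/6}M$ up to logarithmic factors; since the localization of the concave majorant costs $\mathrm{e}^{-CM^3}$, which is $o(n^{-1/6})$ only if $M\gtrsim(\log n)^{1/3}\to\infty$, no choice of $M$ makes both error terms $o(n^{-1/6})$ — the sup-norm route is stuck at the borderline order $O(n^{-1/6})$, which is not enough to transfer $p$-th moments with the required $o(n^{-1/6})$ accuracy. Relatedly, Lemma~4.3 of \cite{kulikov-lopuhaa2008} is not a ``sup-norm closeness implies closeness of majorants'' device, so citing it for that purpose does not close the gap.

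The paper avoids this square-root loss by a time-change identity rather than a pathwise comparison at equal times: with $\phi_{nt}(s)=n^{1/3}\bigl(L(t+n^{-1/3}s)-L(t)\bigr)/L'(t)$ one has the exact algebraic relation
\begin{equation*}
(Z_t\circ\phi_{nt})(s)=\tilde Y_{nt}(s)+\tfrac12\lambda'(t)s^2\left(\frac{\phi_{nt}(s)^2}{s^2}-1\right),
\end{equation*}
so the additive discrepancy is \emph{deterministic} and of order $s^2\alpha_n=O(n^{-1/3}(\log n)^3)=o(n^{-1/6})$ on $H_{nt}$, where $\alpha_n=O(n^{-1/3}\log n)$ bounds $|\phi_{nt}(s)/s-1|$. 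What remains is to compare the majorant of the time-changed process on $H_{nt}$ with that of $Z_t$ on $J_{nt}=\phi_{nt}(H_{nt})$, and this is precisely what Lemma~4.3 of \cite{kulikov-lopuhaa2008} does; there the error scales linearly in $\alpha_n$ (times polylogarithmic factors coming from the size of $Z_t$ near the relevant touch points), not like $\sqrt{\alpha_n}$, which is why the $o(n^{-1/6})$ rate survives. To repair your argument you should replace the modulus-of-continuity comparison by this time-change step (or supply a genuinely new cancellation argument showing the random increment does not affect the moments at order $n^{-1/6}$, which your proposal does not contain).
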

In {Lemmas~\ref{lemma:4.2} and~\ref{lemma:4.4}} the moments of $A_n^E$ and $A_n^W$ are approximated by the moments of the same process. 
This suggests that  the difference between them is of smaller order than $n^{-1/6}$. 
Indeed, on the events $N^J_{nt}$, where $A^J_n=n^{2/3}D_{I_{nt}}\Lambda^J_n$, 
we make use of {Lemma~\ref{lemma:4.2}} and the fact that $D_I$ is invariant under addition of linear functions to obtain that
\[
\sup_{t\in(0,1)}\left|n^{2p/3}[D_{I_{nt}}\Lambda^E_n](t)-n^{2p/3}[D_{I_{nt}}\Lambda^W_n](t) \right|\leq \sup_{t\in(0,1)}\sup_{|s|\leq \log n}\left\{|R^E_{nt}(s)|+|R^W_{nt}(s)|\right\},
\]
where the processes $R^J_{nt}$ converge to zero sufficiently fast. 
On the other hand, on $(N^J_{nt})^c$ we just need the boundedness of the moments of $A^J_{n}$, 
which follows by Lemma~\ref{lemma:4.4} and the fact that the probability of these events is very small (Lemma~\ref{lemma:probability_N_W}).

\begin{lemma}
\label{lemma:4.5}
Suppose that  assumptions (A1)-(A2) are satisfied. 
Then, for  $1\leq p<\min(q,2q-7)$, with $q$ from assumption (A2),
it holds
\[
\begin{split}
\E\left[\left|A_n^E(t)^p-A^W_n(t)^p\right| \right]
&=
o\left( n^{-1/6}\right)\\
\E\left[\left|A_n^E(t)-A^W_n(t)\right|^p \right]
&=
o\left( n^{-1/6}\right)
\end{split}
\]
uniformly in $t\in(0,1)$. 
\end{lemma}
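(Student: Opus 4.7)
The plan is to follow the strategy sketched after Lemma~\ref{lemma:4.4}: decompose each expectation according to the event $N_{nt}=N_{nt}^E\cap N_{nt}^W$ of~\eqref{def:N^J_nt}, on which both $\CM_{[0,1]}\Lambda_n^J$ coincide with $\CM_{I_{nt}}\Lambda_n^J$. On $N_{nt}$ one exploits the common structure provided by Lemma~\ref{lemma:4.1}, while on $N_{nt}^c$ one relies on H\"older's inequality together with the moment bounds of Lemma~\ref{lemma:4.4} and the tail bound on $\p(N_{nt}^c)$ furnished by Lemma~\ref{lemma:probability_N_W}. For the second (power-inside) inequality we treat $|A_n^E(t)-A_n^W(t)|^p$ directly; for the first (power-outside) inequality we additionally use
\[
|a^p-b^p|\le p\bigl(a^{p-1}+b^{p-1}\bigr)|a-b|,\qquad a,b\ge 0,\ p\ge 1,
\]
together with a further application of H\"older.

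On $N_{nt}$, the key observation is that, after the rescaling $s=(u-t)n^{1/3}$, Lemma~\ref{lemma:4.1} yields $n^{2/3}\Lambda_n^J(t+sn^{-1/3})=Y_{nt}(s)+L_{nt}^J(s)+R_{nt}^J(s)$, in which the dominant process $Y_{nt}$ is \emph{common} to $J=E,W$ and $L_{nt}^J$ is affine in $s$. Since $D_I$ is invariant under addition of affine functions and satisfies the Lipschitz estimate $|[D_If](s)-[D_Ig](s)|\le 2\sup_I|f-g|$, we obtain on $N_{nt}$
\[
|A_n^E(t)-A_n^W(t)|\le 2\sup_{|s|\le\log n}\bigl(|R_{nt}^E(s)|+|R_{nt}^W(s)|\bigr).
\]
Taking $p$-th moments and invoking the bounds $O(n^{-p/3+p/q})$ and $O(n^{-p/3}(\log n)^{3p})$ from Lemma~\ref{lemma:4.1}, both of which are $o(n^{-1/6})$ since $q>6$ and $p\ge 1$, settles the $N_{nt}$-part of the second inequality. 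For the first inequality one combines this with H\"older's inequality using conjugate exponents $p/(p-1)$ and $p$, and with the uniform bound $\E[(A_n^E(t)+A_n^W(t))^{p}]=O(1)$ supplied by Lemma~\ref{lemma:4.4} (valid because $p<\min(q,2q-7)$).

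On $N_{nt}^c$, H\"older's inequality with exponents $r>1$ close to $1$ and $s=r/(r-1)$ gives
\[
\E\bigl[|A_n^E(t)^p-A_n^W(t)^p|\1_{N_{nt}^c}\bigr]\le C\bigl(\E[A_n^E(t)^{pr}]+\E[A_n^W(t)^{pr}]\bigr)^{1/r}\p(N_{nt}^c)^{1/s},
\]
and similarly for $\E[|A_n^E(t)-A_n^W(t)|^p\1_{N_{nt}^c}]$. Choosing $r$ so that $pr<\min(q,2q-7)$ (possible because $p<\min(q,2q-7)$), Lemma~\ref{lemma:4.4} controls the moment factor, while Lemma~\ref{lemma:probability_N_W} yields $\p(N_{nt}^c)=O(n^{1-q/3}(\log n)^{-2q}+\mathrm{e}^{-C(\log n)^3})$. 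Since $q>6$ implies $n^{1-q/3}=o(n^{-1/6})$, taking $1/s$ close enough to $1$ makes this contribution $o(n^{-1/6})$ as well. The main obstacle is the bookkeeping of H\"older exponents in the first inequality: one must ensure simultaneously that $pr<\min(q,2q-7)$ and that $\p(N_{nt}^c)^{1/s}$ beats $n^{-1/6}$; the hypothesis $q>6$ is precisely what leaves room for this. Since all constants arising in Lemmas~\ref{lemma:probability_N_W},~\ref{lemma:4.1}, and~\ref{lemma:4.4} are uniform in $t\in(0,1)$, so is the resulting $o(n^{-1/6})$ bound.
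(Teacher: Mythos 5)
Your overall architecture is the same as the paper's: split over the event $K_{nt}=N^E_{nt}\cap N^W_{nt}$ from \eqref{def:N^J_nt}, on $K_{nt}$ use that $A_n^J(t)=n^{2/3}[\mathrm{D}_{I_{nt}}\Lambda_n^J](t)$, Lemma~\ref{lemma:4.1} with the common process $Y_{nt}$ and affine $L^J_{nt}$, and invariance/Lipschitz properties of $D_I$ to get $|A_n^E(t)-A_n^W(t)|\le 2\sup_{|s|\le\log n}(|R^E_{nt}(s)|+|R^W_{nt}(s)|)$, whose $p$-th moment is $o(n^{-1/6})$ since $q>6$; and off $K_{nt}$ use Lemma~\ref{lemma:probability_N_W}. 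For the power-outside inequality on $K_{nt}$ the paper uses Cauchy--Schwarz with the factor $(A^E_n)^{p-1}+(A^W_n)^{p-1}$; your H\"older pair $(p,p/(p-1))$ is a harmless variant (in fact it only needs $p$-th moments of $A_n^J$ from Lemma~\ref{lemma:4.4}, so it is slightly cleaner than requiring $2(p-1)$-th moments).

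The genuine problem is your treatment of the complement. First, the bookkeeping is internally inconsistent: with $1/r+1/s=1$, taking $r$ close to $1$ forces $1/s$ close to $0$, so you cannot simultaneously have ``$r$ close to $1$'' and ``$1/s$ close to $1$''. Second, and more substantively, the two requirements you impose are incompatible for $p$ near the endpoint: since $\p(N_{nt}^c)\asymp n^{1-q/3}(\log n)^{-2q}$ (up to the super-polynomially small term), the factor $\p(N_{nt}^c)^{1/s}=o(n^{-1/6})$ forces $s<2q-6$, i.e.\ $1/r<(2q-7)/(2q-6)$, while Lemma~\ref{lemma:4.4} requires $pr<\min(q,2q-7)$, i.e.\ $1/r>p/\min(q,2q-7)$. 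Both hold only if $p<\min(q,2q-7)\cdot\frac{2q-7}{2q-6}$, which excludes part of the admissible range (e.g.\ $q=8$ and $p\in[7.2,8)$). You also cannot simply let the moment factor grow, because moments of $A_n^E$ of order $\ge q$ are not controlled by (A2) at all (the coupling error only has a polynomial tail of order $q$), so $\E[A_n^E(t)^{pr}]$ may be unbounded once $pr\ge q$. The standard repair, which is what the paper's proof of Lemma~\ref{lemma:4.2} does and what its $O(\p(K_{nt}^c)^{1/2})$ step in Lemma~\ref{lemma:4.5} is implicitly relying on, is to bound the bad-event contribution via the crude bound $A_n^E(t)\le 2n^{2/3}\sup_{s\in[0,1]}|\Lambda_n(s)-\Lambda(s)|$ and split into the coupling error, whose $p\ell$-th moments are controlled by \eqref{eq:expec Sn} for $p\ell<q$, and the Gaussian part $n^{-1/2}\sup_s|W_n(L(s))|$, which has all moments; there the factor $n^{2p/3}\,n^{-p/2}=n^{p/6}$ set against $\p(N_{nt}^c)^{1/\ell'}$ with $\ell'$ close to $1$ is exactly where the hypothesis $p<2q-7$ is used. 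With that substitution for your off-event estimate the proof closes on the full range $1\le p<\min(q,2q-7)$.
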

From Lemma~\ref{lemma:4.4} it follows that 
$n^{1/6}|m-\int_0^1\E\left[A_n^W(t)^p \right]\,\mathrm{d}t |\to 0$,
where $m$ is the asymptotic mean in Theorem~\ref{theo:L_p_lcm}.
Moreover, Lemma~\ref{lemma:4.5} implies that
\[
n^{1/6}\left|\int_0^1 A_n^E(t)^p \,\mathrm{d}t-\int_0^1 A_n^W(t)^p\,\mathrm{d}t \right|
\leq 
n^{1/6}\int_0^1 \left|A_n^E(t)^p- A_n^W(t)^p\right| \,\mathrm{d}t \to 0.
\]
As a consequence, in order to prove Theorem~\ref{theo:L_p_lcm}, it suffices to prove 
asymptotic normality of its Brownian motion version
\[
T^W_n=n^{1/6}\int_0^1\left(A^W_n(t)^p-\E\left[ A^W_n(t)^p\right] \right)\,\dd \mu(t).
\]
The proof of this is completely similar to that of Theorem~2.1 in~\cite{kulikov-lopuhaa2008}. 
First, by using Theorem~\ref{theo:limit process} for a Brownian version of $\zeta_{nt}$ and the mixing property of $A^W_n$
{(this can be obtained in the same way as Lemma~4.6 in~\cite{kulikov-lopuhaa2008})}, 
we derive the asymptotic variance of $T^W_n$ in the following lemma.
\begin{lemma}
	\label{lemma:variance_brownian_version}
	Suppose that  assumptions (A1)-(A3) are satisfied. 
	Then, for every $p\geq 1$,
	\[
	\mathrm{Var}\left(n^{1/6}\int_0^1 A^W_n(t)^p\,\dd \mu(t) \right)
	\to\int_0^1\frac{2^{(2p+5)/3}L'(t)^{(4p+1)/3}}{|\lambda'(t)|^{(2p+2)/3}}w^2(t)\,\dd t\int_0^\infty \mathrm{cov}\left(\zeta(0)^p,\zeta(s)^p\right)\,\dd s.
	\]
\end{lemma}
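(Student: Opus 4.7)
The plan is to follow the strategy of Lemma~4.6 in~\cite{kulikov-lopuhaa2008}, adapted to the general setting of this paper. Expanding the variance gives
\begin{equation*}
V_n := \mathrm{Var}\left(n^{1/6}\int_0^1 A^W_n(t)^p\,\dd \mu(t)\right) = n^{1/3}\int_0^1\int_0^1 \mathrm{cov}\bigl(A^W_n(s)^p, A^W_n(t)^p\bigr)\, w(s)w(t)\,\dd s\,\dd t.
\end{equation*}
The factor $n^{1/3}$ suggests scaling by $n^{-1/3}$ near the diagonal, so the idea is to show that the dominant contribution comes from pairs $(s,t)$ with $|s-t|\le M_n n^{-1/3}$ for some slowly growing $M_n$, typically $M_n=\log n$, and in that regime to apply the Brownian motion version of Theorem~\ref{theo:limit process}.

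First I would establish a mixing-type bound: for $|s-t|\ge M_n n^{-1/3}$, the covariance $\mathrm{cov}(A^W_n(s)^p, A^W_n(t)^p)$ decays fast enough that the contribution of this region to $V_n$ is $o(1)$. On the event $N^W_{ns}(M_n/2)\cap N^W_{nt}(M_n/2)$ from Lemma~\ref{lemma:probability_N_W}, the values $A^W_n(s)$ and $A^W_n(t)$ depend on $W_n$ only through increments on the disjoint intervals $I_{ns}(M_n/2)$ and $I_{nt}(M_n/2)$, and are therefore independent. Off this event, the covariance is controlled via Cauchy-Schwarz together with the uniform moment bound of Lemma~\ref{lemma:4.4} and the tail bound $\p((N^W_{nt}(d))^c)=O(\mathrm{e}^{-Cd^3})$. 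Choosing $M_n=\log n$ makes the off-diagonal contribution negligible. This step is the main obstacle, since it requires a careful decomposition into the independent part and the tail contribution, combined with the precise exponential tail from Lemma~\ref{lemma:probability_N_W}.

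In the diagonal region I would then make the change of variables $s=t+c_2(t)un^{-1/3}$, so that $\dd s=c_2(t)n^{-1/3}\,\dd u$ absorbs the factor $n^{1/3}$, yielding
\begin{equation*}
V_n = \int_0^1 w(t)\, c_2(t) \int_{|u|\le M_n/c_2(t)} \mathrm{cov}\bigl(A^W_n(t)^p, A^W_n(t+c_2(t)un^{-1/3})^p\bigr)\, w(t+c_2(t)un^{-1/3})\,\dd u\,\dd t + o(1).
\end{equation*}
By the Brownian motion analogue of Theorem~\ref{theo:limit process} and the definition~\eqref{def:zeta nt} of $\zeta_{nt}$, the pair $(A^W_n(t), A^W_n(t+c_2(t)un^{-1/3}))$ converges in distribution to $c_1(t)^{-1}(\zeta(0),\zeta(u))$. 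Combined with uniform integrability of the $2p$-th power that follows from the moment bound in Lemma~\ref{lemma:4.4}, the covariance converges pointwise to $c_1(t)^{-2p}\mathrm{cov}(\zeta(0)^p,\zeta(u)^p)$, while the same Cauchy-Schwarz estimate used in the mixing step provides an integrable dominant. The differentiability of $w$ allows replacing $w(t+c_2(t)un^{-1/3})$ by $w(t)$ at cost $o(1)$, and dominated convergence gives the limit.

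Finally, the symmetry of $\zeta$ inherited from that of two-sided Brownian motion $W$ and of the parabola $-t^2$ in $Z$ implies $(\zeta(0),\zeta(u))\stackrel{d}{=}(\zeta(0),\zeta(-u))$, so that $\int_{-\infty}^\infty \mathrm{cov}(\zeta(0)^p,\zeta(u)^p)\,\dd u = 2\int_0^\infty \mathrm{cov}(\zeta(0)^p,\zeta(u)^p)\,\dd u$. A direct calculation using~\eqref{def:c1c2} gives $c_1(t)^{-2p}c_2(t)=2^{(2p+2)/3}L'(t)^{(4p+1)/3}/|\lambda'(t)|^{(2p+2)/3}$; the extra factor of $2$ coming from the symmetry then yields the constant $2^{(2p+5)/3}$, matching the formula in the statement.
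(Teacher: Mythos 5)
Your proposal is correct and follows essentially the same route as the paper, which defers to Lemma~4.7 of \cite{kulikov-lopuhaa2008}: the off-diagonal covariances are made negligible by the localization property from Lemma~\ref{lemma:probability_N_W}, the diagonal contribution is treated via the change of variables $s=t+c_2(t)un^{-1/3}$ together with the Brownian-motion version of Theorem~\ref{theo:limit process} and uniform integrability supplied by Lemma~\ref{lemma:4.4}, and your constant computation $c_1(t)^{-2p}c_2(t)$ with the extra factor $2$ from the symmetry of $\zeta$ reproduces $2^{(2p+5)/3}$ exactly. The only cosmetic difference is that you bound the covariances directly through the events $N^W_{nt}$ and Cauchy--Schwarz rather than first recording the strong-mixing coefficient $\alpha_n(d)=48\mathrm{e}^{-Cnd^3}$ as in~\eqref{eq:mixing} and then invoking a mixing covariance inequality; this is the same localization argument in a different packaging.
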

{The last step is proving the asymptotic normality of $T^W_n$. 
This is done by a big-blocks small-blocks argument, where the contribution of the small blocks to the asymptotic distribution is negligible, 
while the mixing property of $A^W_n$ allows us to approximate the sum over the big blocks by a sum of independent random variables which satisfy the assumptions of Lindeberg central limit theorem.}
\section{Proofs}
\label{sec:proofs}
\begin{proof}[Proof of Lemma~\ref{lem:Lemma1.1SPL}]
	The proof is completely similar to that of Lemma~1.1 in~\cite{kulikov-lopuhaa2006SPL},
	but this time $E_n=\sqrt{n}(\Lambda_n-\Lambda)$ and
	$\sup_{t\in[0,1]}
	\left|
	E_n(t)-B_n\circ L(t)
	\right|
	=
	O_p(n^{-1/2+1/q})$,
	according to~(A2).
	Similar to the proof of Lemma~1.1 in~\cite{kulikov-lopuhaa2006SPL}, this means that
	\[
	X_{nt}(s)
	=
	n^{1/6}
	\left(
	W_n(L(t+sn^{-1/3}))-W_n(L(t))
	\right)
	+
	O_p(n^{-1/3+1/q})
	\stackrel{d}{=}
	W(L'(t)s)+R_n(s),
	\]
	where $\sup_{s\in I}|R_n(s)|\to0$ in probability for compact $I\subset\R$.
	From here on the proof is the same as that of Lemma~1.1 in~\cite{kulikov-lopuhaa2006SPL}.
\end{proof}

\begin{proof}[Proof Lemma~\ref{lemma:probability_N_W}]
	Let $\widehat{\lambda}^W_n$ be the left derivative of $\widehat{\Lambda}^W_n=\CM_{[0,1]}\Lambda^W_n$.
	Define the inverse process
	\[
	U^W_n(a)=\argmax_{t\in[0,1]}\left\{\Lambda^W_n(t)-at \right\}
	\quad\text{and}\quad
	V^W_n(a)=n^{1/3}\left(L(U^W_n(a))-L(g(a))\right),
	\]
	where $g$ denotes the inverse of $\lambda$.
	As in the proof of Lemma~1.3 in~\cite{kulikov-lopuhaa2006SPL}[see~(2.2)], we get
	\begin{equation}
	\label{eqn:probability_N_W}
	\begin{split}
	\p\left((N^W_{nt}(d))^c\right)
	&\leq
	\p\left(\widehat{\lambda}^W_n(t-n^{-1/3}d)=\widehat{\lambda}^W_n(t-n^{-1/3}d/2)\right)\\
	&\quad+
	\p\left(\widehat{\lambda}^W_n(t+n^{-1/3}d)=\widehat{\lambda}^W_n(t+n^{-1/3}d/2)\right).
	\end{split}
	\end{equation}
	Then, with $s=t-dn^{-1/3}/2$, $x=d/2$, and $\epsilon_n=\inf_{t\in[0,1]} |\lambda'(t)|dn^{-1/3}/8$,
	it holds (see (2.3) in~\cite{kulikov-lopuhaa2006SPL}),
	\begin{equation}
	\label{eqn:probability_N_W2}
	\begin{split}
	\p\left(\widehat{\lambda}^W_n(t-n^{-1/3}d)=\widehat{\lambda}^W_n(t-n^{-1/3}d/2)\right)
	&\leq
	\p\left(\widehat{\lambda}^W_n(s+n^{-1/3}x)-\lambda(s+n^{-1/3}x)>\epsilon_n\right)\\
	&\quad+
	\p\left(\widehat{\lambda}^W_n(s)-\lambda(s)<-\epsilon_n\right).
	\end{split}
	\end{equation}
	Moreover, using the switching relation
	$\widehat{\lambda}^W_n(t)\leq a\Leftrightarrow U^W_n(a)\leq t$,
	we rewrite this probability as
	\[
	\begin{split}
	&\p\left\{U^W_n(\lambda(s+n^{-1/3}x)+\epsilon_n)>s+n^{-1/3}x\right\}\\
	&=\p\left\{V^W_n(\lambda(s+n^{-1/3}x)+\epsilon_n)>n^{1/3}\left(L(s+n^{-1/3}x)-L(g(\lambda(s+n^{-1/3}x)+\epsilon_n)) \right)\right\}\\
	&=\p\left\{V^W_n(\lambda(s+n^{-1/3}x)+\epsilon_n)>\frac{\inf_{t\in[0,1]} |\lambda'(t)|\inf_{t\in[0,1]} L'(t) d}{8\sup_{t\in[0,1]} |\lambda'(t)|}\right\}.
	\end{split}
	\]
	It suffices to show that there exists positive constants $C_1$, $C_2$ such that
	\begin{equation}
	\label{eqn:tail_probabilities}
	\p\left(V^W_n(a)>x \right)\leq C_1\mathrm{e}^{-C_2x^3}
	\end{equation}
	because then it follows that
	\[
	\p\left(V^W_n(\lambda(s+n^{-1/3}x)+\epsilon_n)>\frac{\inf_{t\in[0,1]} |\lambda'(t)|\inf_{t\in[0,1]} L'(t) d}{8\sup_{t\in[0,1]} |\lambda'(t)|}\right)\leq \tilde{C}_1\mathrm{e}^{-\tilde{C}_2d^3}.
	\]
	Similarly we can also bound the second probabilities in \eqref{eqn:probability_N_W} and \eqref{eqn:probability_N_W2}. Then the statement of the lemma follows immediately.
	
	Now we prove \eqref{eqn:tail_probabilities}.
	First write
	\[
	\begin{split}
	V_n^W(a)
	&=
	n^{1/3}
	\left(
	L\left(\argmax_{t\in[0,1]}\Big\{W(L(t))+\sqrt{n}(\Lambda(t)-at)\Big\}\right)-L(g(a))
	\right)\\
	&=
	n^{1/3}
	\left(
	\argmax_{s\in[L(0),L(1)]}\Big\{W(s)+\sqrt{n}(\Lambda\left(L^{-1}(s))-aL^{-1}(s)\right)\Big\}-L(g(a))
	\right).
	\end{split}
	\]
	Using properties of the $\argmax$ functional we obtain that the right hand side is equal to the argmax of the process
	\[
	\begin{split}
	&
	n^{1/6}
	\left\{
	W\left(n^{-1/3}s+L(g(a))\right)-W\left(L(g(a))\right)
	\right\}\\
	&+
	n^{2/3}
	\left\{
	\Lambda
	\left(
	L^{-1}
	\left(
	n^{-1/3}s+L(g(a))
	\right)
	\right)-\Lambda(g(a))
	-
	aL^{-1}
	\left(
	n^{-1/3}s+L(g(a))
	\right)
	+ag(a)
	\right\}
	\end{split}
	\]
	for $s\in I_n(a)=[n^{1/3}(L(0)-L(g(a))),n^{1/3}(L(1)-L(g(a)))]$.
	By Brownian motion scaling,
	$V^W_n(a)$ is equal in distribution to $\argmax_{t\in I_n(a)}\{W(t)-D_{a,n}(t)\}$,
	where $W$ is a standard two-sided Brownian motion originating from zero and
	\[
	\begin{split}
	D_{a,n}(s)
	&=
	-n^{2/3}
	\bigg\{
	\Lambda\left(L^{-1}\left(n^{-1/3}s+L(g(a))\right)\right)-\Lambda(g(a))\\
	&\qquad\qquad\qquad-
	aL^{-1}\left(n^{-1/3}s+L(g(a))\right)+ag(a)
	\bigg\}.
	\end{split}
	\]
	By Taylor's formula and the assumptions on $\lambda$ and $L$, one can show that there exist a constant~$c_0>0$, independent of $n$, $a$ and $t$,
	such that $D_{a,n}(t)\geq c_0 t^2$.
	Then~\eqref{eqn:tail_probabilities} follows from Theorem~4 in~\cite{durot2002}, which proves the first statement.
	
	To continue with the second statement, let $\widehat{\lambda}_n$ be the left derivative of $\widehat{\Lambda}_n$
	and define the inverse process
	\[
	U_n(a)=\argmax_{t\in[0,1]}\left\{\Lambda_n(t)-at \right\},\quad\text{and}\quad V_n(a)=n^{1/3}\left(U_n(a)-g(a)\right),
	\]
	where $g$ denotes the inverse of $\lambda$.
	As in~\eqref{eqn:probability_N_W}, we get
	\begin{equation}
	\label{eqn:probability_N_E}
	\begin{split}
	\p\left((N^E_{nt}(d))^c\right)
	&\leq
	\p\left(\widehat{\lambda}_n(t-n^{-1/3}d)=\widehat{\lambda}_n(t-n^{-1/3}d/2)\right)\\
	&\quad+
	\p\left(\widehat{\lambda}_n(t+n^{-1/3}d)=\widehat{\lambda}_n(t+n^{-1/3}d/2)\right).
	\end{split}
	\end{equation}
	where similar to~\eqref{eqn:probability_N_W2},
	\begin{equation}
	\label{eqn:probability_N_E2}
	\begin{split}
	\p\left(\widehat{\lambda}_n(t-n^{-1/3}d)=\widehat{\lambda}_n(t-n^{-1/3}d/2)\right)
	&\leq
	\p\left(\widehat{\lambda}_n(s+n^{-1/3}x)-\lambda(s+n^{-1/3}x)>\epsilon_n\right)\\
	&\quad+
	\p\left(\widehat{\lambda}_n(s)-\lambda(s)<-\epsilon_n\right).
	\end{split}
	\end{equation}
	Then using the switching relation $\widehat{\lambda}_n(t)\leq a\Leftrightarrow U_n(a)\leq t$,
	we rewrite the first probability in~\eqref{eqn:probability_N_E2} as
	\[
	\p\left(V_n(\lambda(s+n^{-1/3}x)+\epsilon_n)>\frac{\inf_{t\in[0,1]} |\lambda'(t)|d}{8\sup_{t\in[0,1]} |\lambda'(t)|}\right).
	\]
	According to Lemma 6.4 in \cite{DurotKulikovLopuhaa2012}, there exists positive constants $C_1, C_2>0$,
	independent of $n$, $a$, and $x$, such that
	\[
	\p\left(V_n(a)>x \right)\leq \frac{C_1n^{1-q/3}}{x^{2q}}+2\mathrm{e}^{-C_2x^3}.
	\]
	It follows that
	\[
	\p\left(V_n(\lambda(s+n^{-1/3}x)+\epsilon_n)>\frac{\inf_{t\in[0,1]} |\lambda'(t)|d}{8\sup_{t\in[0,1]} |\lambda'(t)|}\right)
	\leq
	\frac{\tilde{C}_1n^{1-q/3}}{d^{2q}}+2\mathrm{e}^{-\tilde{C}_2d^3}.
	\]
	Similarly we can also bound the second probabilities in \eqref{eqn:probability_N_E} and \eqref{eqn:probability_N_E2}.
	Then the statement of the lemma follows immediately.
\end{proof}

\begin{proof}[Proof Theorem~\ref{theo:limit process}]
The proof is similar to the proof of Theorem~1.1 in\cite{kulikov-lopuhaa2006SPL}. 
{We briefly sketch the main steps.
Arguing as in the proof of Theorem~1.1 in\cite{kulikov-lopuhaa2006SPL}, it suffices} to show that for any compact $K\subset\R$, 
the process $\{A_n(t+sn^{-1/3}) : s\in K\}$ converges in distribution to the process $\{[D_{\R}Z_t](s):s\in K\}$ 
{on~$D(K)$,
the space of cadlag functions on~$K$, where~$Z_t$ is defined in~\eqref{def:Zt}.
By definition $A_n(t+sn^{-1/3})=[D_{I_{nt}}E_{nt}](s)$, for $s\in I_{nt}=[-tn^{1/3},(1-t)n^{1/3}]$, where $E_{nt}$ is defined in~\eqref{def:Zt}.}
To prove convergence in distribution, we show that for any bounded continuous function $g:D(K)\to\R$, 
\begin{equation}
\label{eqn:D_I_convergence}
\left|\E[g(D_{I_{nt}}E_{nt})]-\E[g(D_{\R}Z_t)]\right|\to 0.
\end{equation}
{To this end, we choose $d>0$ sufficiently large, such that $K\subset[-d/2,d/2]\subset [-d,d]=I$ and take~$n$ sufficiently large so that $I\subset I_{nt}$.
Then, similar to inequality (2.7) in~\citet{kulikov-lopuhaa2006SPL},} the triangular inequality yields
\begin{equation}
\label{eq:triangle}
\begin{split}
\left|\E[g(D_{I_{nt}}E_{nt})]-\E[D_{\R}Z_t]\right|&\leq \left|\E[g(D_{I_{nt}}E_{nt})]-\E[D_{I}E_{nt}]\right|\\
&\quad+\left|\E[g(D_{I}E_{nt})]-\E[D_{I}Z_t]\right|+\left|\E[g(D_{I}Z_{t})]-\E[D_{\R}Z_t]\right|.
\end{split}
\end{equation}
{In the same way as in~\citet{kulikov-lopuhaa2006SPL}, the three terms on the right hand side are shown to go to zero. 
For the last term on the right hand side of~\eqref{eq:triangle}, the argument is exactly the same and makes use of their Lemma~1.2.
The first term on the right hand side of~\eqref{eq:triangle} is bounded similar to their inequality~(2.9) and then uses Lemma~\ref{lemma:probability_N_W}.
For the second term on the right hand side of~\eqref{eq:triangle},} 
note that  from Lemma~\ref{lem:Lemma1.1SPL}, it follows that
\[
Z_{nt}(s)
=
n^{2/3}
\left(
\Lambda_n(t+sn^{-1/3})-\Lambda_n(t)
-
\left(
\Lambda(t+sn^{-1/3})-\Lambda(t)
\right)
\right)
+
\frac12\lambda'(t)s^2,
\]
converges in distribution to~$Z_t$. 
Therefore, because of the continuity of the mapping $D_I$, we get
{$|\E[h(D_{I}Z_{nt})]-\E[h(D_{I}Z_t)]|\to0$,
for any $h:D(I)\to\R$ bounded and continuous.
Moreover, 
we now have} $E_{nt}(s)=Z_{nt}(s)+n^{2/3}\Lambda_n(t)+\lambda(t)sn^{1/3}+R_{nt}(s)$,
where
\[
R_{nt}(s)
=
n^{2/3}
\left(
\Lambda(t+sn^{-1/3})-\Lambda(t)
-
\lambda(t)sn^{-1/3}-\frac12\lambda'(t)s^2n^{-2/3}
\right).
\]
{Similar to the argument leading up to (2.11) in~\citet{kulikov-lopuhaa2006SPL}, from the continuity of $D_I$, its invariance under addition of linear functions, and continuity of $\lambda'$, 
it follows that $|\E[g(D_{I}Z_{nt})]-\E[g(D_{I}E_{nt})]|\to 0$.
This establishes~\eqref{eqn:D_I_convergence} and finishes the proof.
}
\end{proof}

\begin{proof}[Proof of Lemma~\ref{lemma:4.1}]
	By a Taylor expansion, together with~\eqref{def:AnW}, we can write
	\[
	n^{2/3}\Lambda_n^W(t+n^{-1/3}s)=Y_{nt}(s)+L_{nt}^W(s)+R_{nt}^W(s),
	\]
	where
	$L_{nt}^W(s)=n^{2/3}\Lambda(t)+n^{1/6}W_n(L(t))+n^{1/3}\lambda(t)s$
	and
	\[
	R_{nt}^W(s)
	=
	n^{2/3}
	\left(
	\Lambda(t+n^{-1/3}s)-\Lambda(t)-n^{-1/3}
	\lambda(t)s-\frac{1}{2}n^{-2/3}\lambda'(t)s^2
	\right)
	=
	\frac{1}{6}n^{-1/3}\lambda''(\theta_1)s^3
	\]
	for some $|\theta_1-t|\leq n^{-1/3}|s|$.
	Then, from the assumptions (A1)-(A2), it follows that
	\[
	\sup_{|s|\leq \log n}\left|R_{nt}^W(s) \right|^p=O\left(n^{-p/3}(\log n)^{3p} \right),
	\]
	uniformly in $t\in(0,1)$.
	Similarly, we also obtain
	\[
	\begin{split}
	n^{2/3}\Lambda_n^E(t+n^{-1/3}s)
	&=
	n^{2/3}\Lambda_n^W(t+n^{-1/3}s)+n^{1/6}\left(E_n(t+n^{-1/3}s)-B_n(L(t+n^{-1/3}s))\right)\\
	&\quad-
	n^{1/6}\zeta_n\left(L(t)+L'(t)n^{-1/3}s\right)\\
	&\qquad-
	n^{1/6}\zeta_n\left(L(t+n^{-1/3}s)-L(t)-L'(t)n^{-1/3}s \right)\\
	&=Y_{nt}(s)+L_{nt}^E(s)+R_{nt}^E(s),
	\end{split}
	\]
	where
	$L_{nt}^E(s)=L_{nt}^W(s)-n^{1/6}\zeta_nL(t)-n^{-1/6}\zeta_nL'(t)s$
	and
	\[
	R_{nt}^E(s)
	=
	R_{nt}^W(s)+n^{1/6}\left(E_n(t+n^{-1/3}s)-B_n(L(t+n^{-1/3}s))\right)
	-
	\frac{1}{2}n^{-1/2}\zeta_nL''(\theta_2)s^2,
	\]
	for some $|\theta_2-t|\leq n^{-1/3}|s|$.
	Let $S_n=\sup_{s\in[0,1]}|E_n(s)-B_n(L(s))|$.
	From assumption (A2) we have
	$\p(S_n>n^{-1/2+1/q}x)\leq C_qx^{-q}$
	and it follows that
	\begin{equation}
	\label{eq:expec Sn}
	\begin{split}
	\E\left[S_n^p \right]
	&=
	\int_0^\infty \p\left(S_n^p\geq x \right)\,\dd x
	=
	p\int_0^\infty y^{p-1}\p\left(S_n\geq y \right)\,\dd y\\
	&=
	pn^{-p/2+p/q}\int_0^\infty x^{p-1}
	\p\left(S_n\geq n^{-1/2+1/q}x\right)\,\dd x\\
	&\leq
	pn^{-p/2+p/q}\left\{\int_0^1x^{p-1}\,\dd x+C_q\int_1^\infty x^{p-1-q}\,\dd x\right\}
	=
	O\left(n^{-p/2+p/q}\right),
	\end{split}
	\end{equation}
	if $p<q$. Consequently
	$\E\left[\sup_{|s|\leq \log n}\left|R_{nt}^E(s) \right|^p \right]=O\left(n^{-p/3+p/q}\right)$.
\end{proof}

\begin{proof}[Proof of Lemma~\ref{lemma:4.2}]	
	Note that we can write $A^J_n(t)\1_{N_{nt}^J}=n^{2/3}[\mathrm{D}_{I_{nt}}\Lambda^J_n](t)\1_{N_{nt}^J}$.
	We have
	\[
	\E\left[A^J_n(t)^p\right]
	=
	n^{2p/3}\E\left[[\mathrm{D}_{I_{nt}}\Lambda^J_n](t)^p\right]
	+
	\E\left[\left(A^J_n(t)^p-n^{2p/3}[\mathrm{D}_{I_{nt}}\Lambda^J_n](t)^p\right)\1_{(N_{nt}^J)^c}\right].
	\]
	To bound the second term on the right hand side, first note that
	\begin{equation}
	\label{eq:bound A-D}
	\left|A^J_n(t)^p-n^{2p/3}[\mathrm{D}_{I_{nt}}\Lambda^J_n](t)^p\right|
	\leq
	2A^J_n(t)^p,
	\end{equation}
	because the LCM on $[0,1]$ always lies above the LCM over $I_{nt}$.
	Since $\Lambda$ is concave, we have that
	\[
	\begin{split}
	\left|\CM_{[0,1]}\Lambda_n^E-\Lambda_n^E\right|
	&\leq
	\left|\CM_{[0,1]}\Lambda_n^E-[\CM_{[0,1]}\Lambda\right|
	+
	\left|\Lambda_n^E-\Lambda\right|
	+
	\left|\CM_{[0,1]}\Lambda-\Lambda\right|\\
	&=
	\left|\CM_{[0,1]}\Lambda_n^E-[\CM_{[0,1]}\Lambda\right|
	+
	\left|\Lambda_n^E-\Lambda\right|
	\leq
	2\sup_{s\in[0,1]}\left|\Lambda_n^E(s)-\Lambda(s)\right|,
	\end{split}
	\]
	which means that 
	$0\leq A_n^E(t)^p\leq 2^p n^{2p/3}\sup_{s\in[0,1]}\left|\Lambda_n^E(s)-\Lambda(s)\right|^p$.
	Furthermore,
	\[
	0\leq A_n^W(t)^p\leq 2^p n^{2p/3}\left\{\Lambda(1)+n^{-1/2}\sup_{s\in[0,1]}|W_n(s)|\right\}^p.
	\]
	In contrast to~\cite{kulikov-lopuhaa2008} it is more convenient to treat both cases separately.
	For the case $J=E$, with~\eqref{eq:bound A-D}, we find that
	\[
	\E\left[\left(A^E_n(t)^p-n^{2p/3}[\mathrm{D}_{I_{nt}}\Lambda^E_n](t)^p\right)\1_{(N_{nt}^E)^c}\right]
	\leq
	2^{p+1}n^{2p/3}
	\E\left[
	\sup_{s\in[0,1]}\left|\Lambda_n^E(s)-\Lambda(s)\right|^p
	\1_{(N_{nt}^J)^c}
	\right],
	\]
	where
	\[
	\sup_{s\in[0,1]}\left|\Lambda_n^E(s)-\Lambda(s)\right|^p
	\leq
	2^p
	\left\{
	\sup_{s\in[0,1]}\left|\Lambda_n^E(s)-\Lambda(s)-n^{-1/2}W_n(L(s))\right|^p
	+
	n^{-p/2}
	\sup_{s\in[0,1]}\left|W_n(L(s))\right|^p
	\right\}.
	\]
	For the first term on the right hand side we get with H\"older's inequality
	\[
	\begin{split}
	&
	n^{2p/3}
	\E
	\left[
	\sup_{s\in[0,1]}\left|\Lambda_n^E(s)-\Lambda(s)-n^{-1/2}W_n(L(s))\right|^p\1_{(N_{nt}^E)^c}
	\right]\\
	&\quad\leq
	n^{2p/3}
	\E
	\left[
	\sup_{s\in[0,1]}\left|\Lambda_n^E(s)-\Lambda(s)-n^{-1/2}W_n(L(s))\right|^{p\ell}
	\right]^{1/\ell}
	\mathbb{P}
	\left(
	(N_{nt}^E)^c
	\right)^{1/\ell'}\\
	&\quad=
	n^{2p/3}
	O(n^{-p+p/q})O\left(n^{1-q/3}(\log n)^{-2q}+\text{e}^{-C(\log n)^3} \right)^{1/\ell'},
	\end{split}
	\]
	for any $\ell,\ell'>1$ such that $1/\ell+1/\ell'=1$,
	according to~\eqref{eq:expec Sn} and Lemma~\ref{lemma:probability_N_W}.
	When $q>6$, then the right hand side is of the order $o(n^{-1/6})$.
	For the second term, with H\"older's inequality
	\[
	n^{2p/3}
	\E
	\left[
	\sup_{s\in[0,1]}\left|W_n(L(s))\right|^p
	\1_{(N_{nt}^E)^c}
	\right]
	\leq
	n^{2p/3}\E
	\left[
	\sup_{s\in[0,L(1)]}\left|W_n(s)\right|^{p\ell}
	\right]^{1/\ell}
	\mathbb{P}
	\left(
	(N_{nt}^E)^c
	\right)^{1/\ell'}
	\]
	for any $\ell,\ell'>1$ such that $1/\ell+1/\ell'=1$.
	Since all moments of $\sup_{s\in[0,L(1)]}\left|W_n(s)\right|$ are finite,
	it follows from Lemma~\ref{lemma:probability_N_W} that the right hand side is of the order
	\[
	n^{2p/3}\E
	\left[
	\sup_{s\in[0,1]}\left|W_n(L(s))\right|^p
	\1_{(N_{nt}^E)^c}
	\right]
	\leq
	n^{2p/3}O\left(n^{1-q/3}(\log n)^{-2q}+\text{e}^{-C(\log n)^3} \right)^{1/\ell'}.
	\]
	Hence, because $q>6$ and $p<2q-7$, it follows that
	$|A^E_n(t)^p-n^{2p/3}[\mathrm{D}_{I_{nt}}\Lambda^E_n](t)|=o(n^{-1/6})$.
	
	Next, consider the case $J=W$.
	Then with~\eqref{eq:bound A-D} and Cauchy-Schwarz, we find
	\[
	\begin{split}
	&
	\E\left[\left(A^W_n(t)^p-n^{2p/3}[\mathrm{D}_{I_{nt}}\Lambda^W_n](t)^p\right)\1_{(N_{nt}^W)^c}\right]\\
	&\leq
	2^{p+1}n^{2p/3}
	\left\{
	\E
	\left[
	\left(
	\Lambda(1)+n^{-1/2}\sup_{s\in[0,1]}|W_n(s)|
	\right)^{2p}
	\right]
	\right\}^{1/2}
	\left\{
	\mathbb{P}\left(
	(N_{nt}^W)^c
	\right)
	\right\}^{1/2}.
	\end{split}
	\]
	Again using that all moments of $\sup_{s\in[0,L(1)]}\left|W_n(s)\right|$ are finite,
	according to Lemma~\ref{lemma:probability_N_W}, the right hand side is of the order
	$n^{2p/3}O(\mathrm{e}^{-C(\log n)^3})=o(n^{-1/6})$.
	It follows that for $J=E,W$,
	\[
	\E\left[A^J_n(t)^p\right]
	=
	n^{2p/3}
	\E\left[[\mathrm{D}_{I_{nt}}\Lambda^J_n](t)^p\right]+o\left( n^{-1/6}\right).
	\]
	Moreover, Lemma \ref{lemma:4.1} implies that
	$n^{2/3}[\mathrm{D}_{I_{nt}}\Lambda^J_n](t)=[\mathrm{D}_{H_{nt}}Y_{nt}](0)+\Delta_{nt}$,
	where $\Delta_{nt}=[\mathrm{D}_{H_{nt}}(Y_{nt}+R^J_{nt})](0)-[\mathrm{D}_{H_{nt}}Y_{nt}](0)$.
	From Lemma~\ref{lemma:4.1}, we have
	\begin{equation}
	\label{eqn:moments_Delta}
	\E|\Delta_{nt}|^p
	\leq
	2^p
	\E\left[\sup_{|s|\leq \log n}\left|R_{nt}^J(s) \right|^p \right]
	=
	O\left(n^{-p/3+p/q}\right).
	\end{equation}
	Then as in Lemma 4.2 in \cite{kulikov-lopuhaa2008}, one can show that
	\[
	\begin{split}
	\E\left[A^J_n(t)^p\right]
	&=
	\E\left[[\mathrm{D}_{H_{nt}}Y_{nt}(0)^p\right]+\epsilon_{nt}+o\left( n^{-1/6}\right)\\
	&=
	\E\left[[\mathrm{D}_{H_{nt}}Y_{nt}(0)^p\right]+O\left(n^{-1/3+1/q}(\log n)^{2p-2}\right)+o\left( n^{-1/6}\right)\\
	&=
	\E\left[[\mathrm{D}_{H_{nt}}Y_{nt}(0)^p\right]+o\left( n^{-1/6}\right).
	\end{split}
	\]
	This finishes the proof.
\end{proof}

\begin{proof}[Proof of Lemma~\ref{lemma:4.4}]
The proof is exactly the same as the one for Lemma~4.4 in~\cite{kulikov-lopuhaa2008}. 
Define $J_{nt}=[n^{1/3}\left(L(a_{nt})-L(t)\right)/L'(t),n^{1/3}\left(L(b_{nt})-L(t) \right)/L'(t)]$,
where $a_{nt}=\max(0,t-n^{-1/3}\log n)$ and $b_{nt}=\min(1,t+n^{-1/3}\log n)$.
Furthermore, here we take
\[
\phi_{nt}(s)
=
\frac{n^{1/3}\left(L(t+n^{-1/3}s)-L(t) \right)}{L'(t)}.
\]
{As in the proof of Lemma~4.4 in~\cite{kulikov-lopuhaa2008},
it follows that $1-\alpha_n\leq \phi_{nt}(s)/s\leq 1+\alpha_n$, for 
$s\in H_{nt}$, the interval from Lemma~\ref{lemma:4.2}, 
and $\alpha_n=C_1n^{-1/3}\log n$, with $C_1>0$ only depending on $L'$.
Let $Z_t$ be the process in~\eqref{def:Zt}. 
Then}
\[
(Z_t\circ \phi_{nt})(s)=\tilde{Y}_{nt}+\frac{1}{2}\lambda'(t)s^2\left(\frac{\phi_{nt}(s)^2}{s^2}-1 \right), 
\]
where $\tilde{Y}_{nt}$ is defined in \eqref{def:tilde_Y}. 
{Lemma 4.3} in~\cite{kulikov-lopuhaa2008}, {then} allows us to approximate the moments of $[D_{H_{nt}}\tilde{Y}_{nt}](0)$ by the  moments of $[D_{J_{nt}}Z_t](0)$. 
{Completely similar to the proof of Lemma~4.4 in~\cite{kulikov-lopuhaa2008},
the result now follows from Lemma~\ref{lemma:4.2} and Brownian scaling.}
\end{proof}

\begin{proof}[Proof of Lemma~\ref{lemma:4.5}]
Let $I_{nt}$ and $N_{nt}^J$ be as in Lemma~\ref{lemma:4.2} and define $K_{nt}=N^E_{nt}\cap N^W_{nt}$. 
{Then
\begin{equation}
\label{eq:decomposition Lemma45}
\begin{split}
\E\left|
A_n^E(t)^p-A_n^W(t)^p
\right|
&=
n^{2p/3}
\E\left|
[D_{I_{nt}}\Lambda_n^E](t)^p-[D_{I_{nt}}\Lambda_n^W](t)^p
\right|
\1_{K_{nt}}\\
&\quad+
\E\left|
A_n^E(t)^p-A_n^W(t)^p
\right|
\1_{K_{nt}^c}.
\end{split}
\end{equation}
We bound the two terms on the right hand side, following the same line of reasoning as in Lemma~4.5 in~\cite{kulikov-lopuhaa2008}.}
Using that according to Lemma~\ref{lemma:probability_N_W},
\[
\p(K_{nt}^c)\leq \p\left((N^E_{nt})^c\right)+\p\left((N^W_{nt})^c\right)=O\left(n^{1-q/3}(\log n)^{-2q}+\mathrm{e}^{-C(\log n)^3} \right),
\]
the second term on the right hand side of~\eqref{eq:decomposition Lemma45} is of the order
$O\left(\p(K_{nt}^c)^{1/2}\right)=o\left( n^{-1/6}\right)$,
because $q>6$. 
On the other hand, the first term on the right hand side of~\eqref{eq:decomposition Lemma45} can be bounded by
\[
p\left\{\E\left[\left(A^E_n(t)^{p-1}+A^W_n(t)^{p-1}\right)^2\right] \right\}^{1/2}\left\{\E\left[\left(\sup_{|s|\leq \log n}|R^E_{nt}|+ \sup_{|s|\leq \log n}|R^W_{nt}|\right)^2\right]\right\}^{1/2},
\]
where the right hand side is of the order $O\left(n^{-1/3+1/q}\right)=o\left( n^{-1/6}\right)$,
according to Lemmas~\ref{lemma:4.1} and~\ref{lemma:4.4}.

In the same way, we have
$\E\left[\left|A_n^E(t)-A^W_n(t)\right|^p\1_{K_{nt}^c} \right]=O\left(\p(K_{nt}^c)^{1/2}\right)=o\left( n^{-1/6}\right)$
and
\[
n^{2p/3}\E\left[\left|[\mathrm{D}_{I_{nt}}\Lambda^E_n](t)-[\mathrm{D}_{I_{nt}}\Lambda_n^W](t)\right|^p\1_{K_{nt}} \right]
\leq
\E\left[\left(\sup_{|s|\leq \log n}|R^E_{nt}|+ \sup_{|s|\leq \log n}|R^W_{nt}|\right)^p\right]
\]
which is of the order $O\left(n^{-p/3+p/q}\right)=o\left( n^{-1/6}\right)$,
according to Lemma~\ref{lemma:4.1}.
\end{proof}

\begin{proof}[Proof of Lemma~\ref{lemma:variance_brownian_version}]
	The proof is completely similar to the proof of Lemma~4.7 in \cite{kulikov-lopuhaa2008}.
	For $t\in(0,1)$ fixed, and $t+c_2(t)sn^{-1/3}\in(0,1)$, let
	$\zeta_{nt}(s)=c_1(t)A_n^W(t+c_2(t)sn^{-1/3})$,
	where $A_n^W$ is defined in~\eqref{def:AnW} and $c_1(t)$ and $c_2(t)$ are defined in~\eqref{def:c1c2}.
	According to Theorem~\ref{theo:limit process}, $\zeta_{nt}$ converges in distribution to $\zeta$, 
	as defined in~\eqref{def:Z and zeta}.
	As in the proof of Lemma~4.7 in \cite{kulikov-lopuhaa2008}, Lemma~\ref{lemma:4.4} yields that,
	for $s$, $t$, and $k$ fixed, the sequence $\zeta_{nt}^W(s)^k$ is uniformly integrable, so that 
	the moments of $(\zeta_{nt}^W(0)^k,\zeta_{nt}^W(s)^k)$ converge to the corresponding moments of
	$(\zeta(0)^k,\zeta(s)^k)$.
	
	Furthermore, the process $\{A_n^W(t):t\in(0,)\}$ is strong mixing, i.e.,
	for $d>0$,
	\begin{equation}
	\label{eq:mixing}
	\sup
	\left|
	P(A\cap B)-P(A)P(B)
	\right|
	=
	\alpha_n(d)=48\mathrm{e}^{-Cnd^3}
	\end{equation}
	where $C>0$ only depends on $\lambda$ and $L$ from (A2),
	and where the supremum is taken over all sets $A\in\sigma\{A_n^W(s):0\leq s\leq t\}$
	and $B\in\sigma\{A_n^W(s):t+d\leq s < 1\}$.
	This can be obtained by arguing completely the same as in the proof of Lemma~4.6 in~\cite{kulikov-lopuhaa2008}.
	The rest of the proof is the same as that of  Lemma~4.7 in \cite{kulikov-lopuhaa2008}.
\end{proof}
\begin{proof}[Proof of Theorem \ref{theo:L_p_lcm}]
	The proof is completely similar to the proof of Theorem~2.1 in~\cite{kulikov-lopuhaa2008},
	by using the method of big-blocks small-blocks and the exponential decreasing mixing function 
	$\alpha_n$ from~\eqref{eq:mixing}.
\end{proof}

\section*{References}
\bibliography{shapeconstrained-estimationSPL}

\end{document}